\documentclass[10pt,leqno]{article}
\usepackage{epigraph}
\usepackage{amssymb}
\usepackage{amsmath}
\usepackage{tabularx}
\usepackage[arrow,matrix,tips,frame,color,line,poly]{xy}
\usepackage{theorem}
\newtheorem{theorem}[equation]{Theorem}
\newtheorem{corollary}[equation]{Corollary}
\newtheorem{question}[equation]{Question}
\newtheorem{definition}[equation]{Definition}

\newtheorem{lemma}[equation]{Lemma}

\newtheorem{proposition}[equation]{Proposition}
{\theorembodyfont{\rmfamily}

\newtheorem{example}[equation]{Example}
}
\newcommand{\qed}{\hfill $\square$ \medskip}
\newcommand\bx{\boxtimes}
\newenvironment{proof}[1][Proof]{\noindent\textbf{#1.} }{\
\qed}

\newcommand{\Cat}{\mathcal{C}}

\newcommand{\Ob}{Ob(\Cat)}
\newcommand{\ObG}{Ob(\Cat_G)}
\newcommand{\ObH}{Ob(\Cat_H)}
\newcommand{\ObK}{Ob(\Cat_K)}
\newcommand{\ObM}{Ob(\Cat_M)}

\newcommand{\R}{\mathbb R}

\newcommand{\C}{\mathbb C}
\newcommand{\Z}{\mathbb Z}

\renewcommand{\S}{\mathcal S}

\renewcommand{\O}{\mathrm{O}}
\newcommand{\Sp}{\mathrm{Sp}}
\newcommand{\Mp}{\widetilde{\mathrm{Sp}}}

\DeclareMathOperator{\GL}{GL}

\newcommand{\Ext}{\mathrm{Ext}}
\newcommand{\EP}{\mathrm{EP}}

\newcommand\inv{^{-1}}

\newcommand{\frn}{{\mathfrak n}}

\newcommand{\ind}{\mathrm{ind}}

\DeclareMathOperator{\Hom}{Hom}
\DeclareMathOperator{\Ad}{Ad}
\DeclareMathOperator{\Ker}{Ker}

\DeclareMathOperator{\Ind}{Ind}
\DeclareMathOperator{\Res}{Res}

\renewcommand{\sec}[1]{\section{#1}
\renewcommand{\theequation}{\thesection.\arabic{equation}}
  \setcounter{equation}{0}}
\newcommand{\subsec}[1]{\subsection{#1}
\renewcommand{\theequation}{\thesection.\arabic{equation}}}

\begin{document}

\date{}

\title{Euler Poincare Characteristic for the Oscillator Representation}
\author{Jeffrey D. Adams\thanks{The first author was supported in part
    by NSF grant DMS-1317523.} \\Department of
  Mathematics \\ University of Maryland\\College Park, MD 20742
\and Dipendra Prasad \thanks{The second author held Jean-Morlet Chaire at CIRM, Marseille during part of this work.}\\
Tata Institute of Fundamental Research\\
Colaba, Mumbai-400005, India\\
\and Gordan Savin\thanks{The third author was supported in part by NSF grant DMS-1359774.}\\Department of Mathematics\\University of Utah\\Salt Lake City, UT 84112}
\maketitle

{\renewcommand{\thefootnote}{} 
\footnote{2000 Mathematics Subject Classification: 11F70 (Primary), 22E50
\footnote{The first author was supported in part by  National Science
Foundation Grant \#DMS-1317523}
\footnote{The second author was supported in part by  National Science
Foundation Grant \#DMS-3.14159}
}
\renewcommand{\epigraphflush}{flushleft}
\setlength{\epigraphwidth}{3.2in}
\setlength{\afterepigraphskip}{0pt}
\setlength{\beforeepigraphskip}{0pt}
\epigraph{Dedicated to Roger Howe on his $70^{th}$ birthday}{}

\sec{Introduction}
\label{s:introduction}

Consider a dual reductive pair of subgroups $(G,G')$ of the
metaplectic group $\Mp(2n)$, a two fold cover of $\Sp(2n)=\Sp(2n,F)$ for $F$ a local field.  Let $\omega$ be the oscillator
representation of $\Mp(2n)$. The dual pair correspondence, 
due to Roger Howe, is a bijection between
subsets of the  duals of $G,G'$; we say
$\pi\leftrightarrow \pi'$ if $\Hom_{(G,G')}(\omega,\pi\boxtimes\pi')\ne 0$.
This is a deep theorem, first proved by Howe over $\R$ 
\cite{howe_transcending}, and for $p$-adic fields by Waldspurger
\cite{waldspurger_howe_conjecture} and Gan-Takeda \cite{gan_takeda_howe_conjecture}.

This correspondence plays an important role in the theory of
automorphic forms. It is typically subtle and difficult to compute 
explicitly, and there is a wealth of literature on the subject.
For example see \cite{mvw}.

It is natural to try to simplify the problem by generalizing: instead
of  $\Hom_{(G,G')}(\omega,\pi\boxtimes\pi')$, one considers
$\Ext^i_{(G,G')}(\omega,\pi\boxtimes\pi')$. This suggests
the possibility of studying the Euler-Poincare characteristic
$$\EP(\omega,\pi\boxtimes\pi')=\sum_i(-1)^i\Ext^i_{(G,G')}(\omega,\pi\boxtimes\pi').
$$
There are a number of technical issues to overcome in order to carry
this out, the first of which is to show that
$\Ext^i_{(G,G')}(\omega,\pi\boxtimes\pi')$ is finite dimensional, and
$0$ for sufficiently large $i$. Some of the general theory required can be found in \cite{prasadext}. 

In the theory of dual pairs it is fruitful to consider the question
from a less symmetric point of view.  Fix an irreducible
representation $\pi$ of $G$, and consider the maximal $\pi$-isotypic
quotient of $\omega$. As a $G\times G'$-module, this quotient is
isomorphic to $\pi\boxtimes \Theta(\pi)$ for a smooth $G'$-module
$\Theta(\pi)$, whose (algebraic) dual  is $\Hom_G(\omega,\pi)$.  The main
step in the proof of the duality correspondence is to show that $\Theta(\pi)$
is a finite length $G'$-module, with unique irreducible quotient
$\theta(\pi)$.  Then $\pi\leftrightarrow \theta(\pi)$ is the dual pair
correspondence. As the discussion above suggests, the fine structure
of $\Theta(\pi)$ is also of some interest.

So in our setting we consider the spaces $\Ext^i_G(\omega,\pi)$ as
$G'$-modules. We specialize now to the $p$-adic case.
In order to stay in the category of smooth
representations, it is better to take the smooth vectors
$\Ext^i_G(\omega,\pi)^\infty$.  We would like to know that
$\EP(\omega,\pi)^\infty:=\sum_i (-1)^i\Ext^i_G(\omega,\pi)^\infty$ is a
well-defined element of the Grothendieck group of finite length
representations of $G'$.

For simplicity of exposition, we will only consider the case of type
II dual pairs $(G,G')= (\GL(m),\GL(n))$ in this Introduction,
directing the reader to the main body of the paper for type I dual
pairs.  Fix an irreducible representation $\pi$ of $\GL(m)$. A
preliminary result is that $\EP_G(\omega,\pi)^\infty$ is well defined.

\begin{proposition}
$\Ext^i_G(\omega,\pi)^\infty$ is a finite length module for $G'$ for all $i$, and equal to $0$  if $i>\text{rank}(G)$. 
Hence  $\EP(\omega,\pi)^\infty$ is a well defined element of the Grothendieck group.
\end{proposition}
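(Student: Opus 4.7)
I would work in the Schr\"odinger model $\omega \cong \caS(M_{m\times n}(F))$, where $G\times G'=\GL(m)\times\GL(n)$ acts by twisted left/right multiplication, and exploit the $(G\times G')$-stable filtration
\[
0 = \caS_{-1} \subset \caS_0 \subset \caS_1 \subset \cdots \subset \caS_{\min(m,n)} = \omega
\]
where $\caS_k$ consists of Schwartz functions supported on matrices of rank $\le k$. Since each rank stratum is a single $(G\times G')$-orbit, the successive quotient $\caS_k/\caS_{k-1}$ is isomorphic to $c\text{-}\Ind_{R_k}^{G\times G'}\chi_k$, where $R_k\subset P_k\times Q_k$ is the mixed parabolic that glues the maximal parabolic $P_k\subset G$ and the maximal parabolic $Q_k\subset G'$ along their common $\GL(k)$-block, and $\chi_k$ is the corresponding modulus character.

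The long exact sequence of $\Ext^\bullet_G(-,\pi)$ associated to this filtration reduces the statement to the analogous assertion for each subquotient. On $c\text{-}\Ind_{R_k}^{G\times G'}\chi_k$, Frobenius reciprocity (applied to the $G$-action) together with Hochschild--Serre for the unipotent radical of $P_k$ identifies $\Ext^i_G$ with $\Ext^i_{L_k}$ of a Jacquet module of $\pi$ against a representation of $L_k\cong\GL(k)\times\GL(m-k)$ that still carries the residual $G'$-action, the latter being realized on a parabolically induced module from $Q_k$.

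Finite length then follows from three facts: the Jacquet module of a finite-length representation is of finite length (Bernstein--Zelevinsky); Ext groups between finite-length smooth representations of a reductive $p$-adic group are finite-dimensional in each degree (see \cite{prasadext}); and parabolic induction from $Q_k$ to $G'$ preserves finite length, which turns the finite-length $Q_k$-module that emerges from the computation into a finite-length $G'$-module. For the vanishing when $i>\mathrm{rank}(G)=m$, I would appeal to the theorem that the category of smooth representations of a reductive $p$-adic group has cohomological dimension equal to its split rank, so that each $\Ext^i_{L_k}$ term vanishes for $i>m$ and the long exact sequence propagates the vanishing to $\omega$.

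The main obstacle in executing this plan is bookkeeping the $G'$-action through the chain of reductions. The ``mixed parabolic'' structure of the stabilizer $R_k$ means the $G'$-action does not factor off cleanly; one must verify that after Frobenius reciprocity, Hochschild--Serre, and Jacquet restriction, what remains is genuinely a finite-length parabolically induced $G'$-module rather than an opaque smooth representation carrying some undetermined $G'$-action. This is the step that relies most heavily on the machinery of \cite{prasadext}.
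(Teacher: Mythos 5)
Your filtration-by-rank setup and the idea of reducing to each stratum via a long exact sequence match the paper's proof exactly, but there are two genuine gaps that the paper handles explicitly and your outline does not.

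The first concerns what actually sits on each stratum after Frobenius reciprocity. You reduce to $\Ext^i_{L_k}$ with $L_k\cong\GL(k)\times\GL(m-k)$, and then try to close via the fact that $\Ext$ between finite-length smooth representations is finite-dimensional. But the argument in the first variable is \emph{not} a finite-length $L_k$-module: the stratum $\S(\Omega_k)$ is $\xi\,\Ind_{P_k\times Q_k}^{\GL(m)\times\GL(n)}\bigl(\S(\GL(k))\boxtimes 1\bigr)$, and after second-adjointness Frobenius reciprocity (Lemma \ref{l:frob2}(2)) what lands in the first slot of $\Ext^i_{M_k}$ is $\Ind_{Q_k}^{\GL(n)}(\S(\GL(k))\boxtimes 1)$ with $\GL(k)$ acting by its left action on $\S(\GL(k))$ — this is the regular representation, enormously far from finite length, so the finiteness result for pairs of finite-length representations simply does not apply. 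The paper's resolution is the projectivity argument: Lemma \ref{l:ind_proj} shows $\Ind_{Q_k}^{\GL(n)}(\S(\GL(k))\boxtimes 1)$ is projective as a $\GL(k)$-module, so after K\"unneth the $\GL(k)$-contribution to $\Ext$ is concentrated in degree $0$, and only then does the problem collapse to $\Ext^i_{\GL(m-k)}(1,\tau_j)$ between finite-length representations, with the $G'$-module structure coming out via Lemma \ref{L:dual_pair_corr} and Lemma \ref{l:S(G)}(2). Nothing in "Hochschild--Serre plus Bernstein--Zelevinsky finiteness" substitutes for this projectivity step.

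The second gap is the one the paper flags as "a technical issue related to taking smooth vectors." The functor $V\mapsto V^{G'-\infty}$ is only left exact, so the long exact $\Ext$-sequence you want to run along the filtration does not automatically stay exact after passing to $G'$-smooth vectors — and you must pass to smooth vectors, since the statement is about $\Ext^i_G(\omega,\pi)^\infty$ as a $G'$-module. The paper proves a dedicated Lemma \ref{l:longexact} (using the isomorphism $\Ext_H^i(Y,X)^K\simeq\Ext_H^i(Y^K,X)$ for compact open $K\subset G$) precisely to justify that the long exact sequence survives. Your outline treats this as automatic, which it is not. On the plus side, your appeal to the finite global dimension of the category of smooth $G$-representations for the bound $i\le\mathrm{rank}(G)$ is a valid, more direct route to the vanishing than the paper's explicit calculation (which obtains it as a byproduct, since the $k$-th stratum contributes only in degrees $i\le m-k$), so that piece of the plan is fine.
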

See Proposition \ref{p:epomega}.  When $i=0$ this is  the well
known fact that $\Theta(\pi)$ has finite length. See Proposition
\ref{p:bigtheta}.

Now suppose $m\le n$ and that $P$ is a parabolic subgroup of $\GL(n)$ with Levi factor $\GL(m)\times \GL(n-m)$. 
It is well known that for $\pi$ an irreducible  representation of $\GL(m)$, 
\begin{equation}
\label{e:ind}
\Hom_{\GL(m) \times \GL(n)}(\omega,\pi \boxtimes i_P^G(\pi\boxtimes 1))\ne 0
\end{equation}
where $i_P^G$ denotes normalized smooth induction from $P$ to $G$. See \cite{mvw}. 

Hence a  naive guess for the explicit dual correspondence for type II
dual pairs would be 
 that for $n \geq m$ (which we can assume without loss of generality since $\GL(m)$ and $\GL(n)$ play a symmetrical role),
the map in \eqref{e:ind} is surjective and realizes the maximal $\pi$-isotypic quotient of $\omega$, and that  the induced
representation has a unique irreducible quotient. If this is the
case, then 
$\Theta(\pi)=i_P^{\GL(n)}(\pi\boxtimes 1)$, and $\theta(\pi)$ is the
unique irreducible quotient of this induced representation.
Generically, of course, the induced representation is irreducible and
this is true. However, in general,  the induced representation may be reducible, 
and computing the explicit dual pair
correspondence amounts to understanding the image of the map  in \eqref{e:ind}, and the
structure of the induced representation.  See
\cite[conjecture on bottom of page 64]{mvw} and \cite{minguez_type_II}.

Our first   main result is that the corresponding naive guess does in
fact hold if one replaces $\Hom_G(\omega,\pi)$ with
$\EP_G(\omega,\pi)^\infty$. 

\begin{theorem}
\label{t:epgln_introduction}
Consider the oscillator representation  $\omega$ for the dual
pair $(\GL(m)$, $\GL(n))$. 
Let $\pi$ be an irreducible representation of $\GL(m)$. 
Then
$$
\EP_{\GL(m)}(\omega,\pi)^{\infty}=
\begin{cases}
0&n<m\\ 
i_{P}^{\GL(n)}(\pi\boxtimes1)&n\ge m
\end{cases}
$$
where $P$ is a parabolic subgroup of $\GL(n)$ with  Levi subgroup $\GL(m)\times \GL(n-m)$.
\end{theorem}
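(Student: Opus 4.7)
The plan is to exploit the geometric realization $\omega\cong C_c^\infty(M_{m\times n}(F))$ of the type~II oscillator representation, on which $(g,h)\in \GL(m)\times \GL(n)$ acts by $(g,h)\cdot f(X)=f(g^{-1}Xh)$ up to character twists. The $(\GL(m)\times \GL(n))$-orbits on $M_{m\times n}(F)$ are the rank strata $X_k$ for $0\le k\le \min(m,n)$, whose closures are nested, so $\omega$ carries a $(\GL(m)\times\GL(n))$-stable finite filtration with graded pieces $C_c^\infty(X_k)$. By additivity of $\EP(-,\pi)^\infty$ on short exact sequences (Proposition~\ref{p:epomega}), the problem reduces to computing each $\EP_{\GL(m)}(C_c^\infty(X_k),\pi)^\infty$ as a virtual $\GL(n)$-module.

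For each $k$, the $(\GL(m)\times\GL(n))$-stabilizer of the base point $x_k=\bigl(\begin{smallmatrix}I_k&0\\0&0\end{smallmatrix}\bigr)$ is a split extension $S_k=H_k\rtimes P_k$, where $P_k\subset \GL(n)$ is the standard parabolic with Levi $\GL(k)\times \GL(n-k)$ stabilizing $\ker x_k$, and $H_k=\bigl(\begin{smallmatrix}I_k&*\\0&\GL(m-k)\end{smallmatrix}\bigr)\subset \GL(m)$ is the $\GL(m)$-stabilizer of $x_k$. Hence $C_c^\infty(X_k)\cong \ind_{P_k}^{\GL(n)}\bigl(\ind_{H_k}^{\GL(m)}\chi_k\bigr)$ for a suitable character $\chi_k$. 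Since the two factor actions commute, Shapiro's lemma together with the commutation of induction and Ext in different factors gives
\[\Ext^i_{\GL(m)}\bigl(C_c^\infty(X_k),\pi\bigr)^\infty\;\cong\;i_{P_k}^{\GL(n)}\Bigl(\Ext^i_{H_k}(\chi_k,\pi|_{H_k})^\infty\Bigr),\]
where the passage from compact to normalized induction is absorbed into $\chi_k$. When $k=m$ (which forces $n\ge m$) the subgroup $H_m$ is trivial, so the inner Ext collapses to $\pi$ in degree zero and the contribution is exactly $i_{P}^{\GL(n)}(\pi\boxtimes 1)$.

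The heart of the proof --- and the main obstacle --- is to show that $\EP_{H_k}(\chi_k,\pi|_{H_k})^\infty=0$ for every $k<m$. For $0<k<m$ the subgroup $H_k\cong \GL(m-k)\ltimes U_k$ contains the non-trivial abelian unipotent normal subgroup $U_k\cong M_{k\times(m-k)}(F)$, and the natural route is the Hochschild--Serre spectral sequence for $1\to U_k\to H_k\to \GL(m-k)\to 1$ combined with the EP-vanishing machinery for smooth representations of reductive $p$-adic groups with respect to unipotent subgroups developed in \cite{prasadext}. The remaining case $k=0$, in which $H_0=\GL(m)$, is handled via the elliptic-pairing interpretation of $\EP$ on a reductive $p$-adic group (Kazhdan; Schneider--Stuhler), which forces the pairing of the character $\chi_0$ with any irreducible $\pi$ to vanish. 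Granting these, for $n<m$ every stratum satisfies $k\le n<m$ and $\EP_{\GL(m)}(\omega,\pi)^\infty=0$, while for $n\ge m$ only $k=m$ survives and yields $i_P^{\GL(n)}(\pi\boxtimes 1)$, completing the proof.
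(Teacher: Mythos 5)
Your proposal follows the paper's overall strategy (rank stratification, additivity of $\EP(-,\pi)^\infty$, reduce to a single stratum, locate where the non-zero contribution comes from), but replaces the paper's presentation of the open stratum with an orbit--stabilizer one, and this is where two genuine gaps appear. The paper uses the MVW description $\S(\Omega_k)\simeq\xi\,\Ind_{P_k\times Q_k}^{\GL(m)\times\GL(n)}(\S(\GL(k))\boxtimes 1)$, which cleanly separates a $\GL(m)$-parabolic $P_k$, a $\GL(n)$-parabolic $Q_k$, and a $\GL(k)\times\GL(k)$-module $\S(\GL(k))$; one then applies Bernstein's second adjointness on the $P_k$-side, K\"unneth, projectivity of $\Ind_{Q_k}^{\GL(n)}(\S(\GL(k))\boxtimes 1)$ as a $\GL(k)$-module (Lemma~\ref{l:ind_proj}), Lemmas \ref{L:dual_pair_corr} and \ref{l:S(G)}(2), and finally the non-compact-center vanishing of $\EP_{\GL(m-k)}(1,\tau_j)$ (Proposition~\ref{p:ep=0}). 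Your version writes $C_c^\infty(X_k)$ as a two-step induction through the full stabilizer $S_k$, but $S_k$ is not a product subgroup of $\GL(m)\times\GL(n)$ (its image in each factor is a parabolic and the two $\GL(k)$-Levi blocks are diagonally identified), so the formula $C_c^\infty(X_k)\cong\ind_{P_k}^{\GL(n)}(\ind_{H_k}^{\GL(m)}\chi_k)$ does not make sense as a $\GL(m)\times\GL(n)$-module without specifying a nontrivial $L_k$-action on the inner term; and the proposed identification $\Ext^i_{\GL(m)}(C_c^\infty(X_k),\pi)^\infty\cong i_{P_k}^{\GL(n)}\bigl(\Ext^i_{H_k}(\chi_k,\pi|_{H_k})^\infty\bigr)$ requires both a Shapiro lemma for the non-open, non-parabolic subgroup $H_k\subset\GL(m)$ (which is not the standard Frobenius/second adjointness) and a well-defined $\GL(k)\times\GL(n-k)$-module structure on $\Ext^i_{H_k}(\chi_k,\pi|_{H_k})$; neither is justified.

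The second, and larger, gap is the central vanishing step. Showing $\EP_{H_k}(\chi_k,\pi|_{H_k})^\infty=0$ for $0<k<m$ by ``Hochschild--Serre plus EP-vanishing machinery for unipotent subgroups developed in \cite{prasadext}'' is not a proof: no such machinery is in that reference, unipotent cohomology of $p$-adic groups does not vanish in general, and making the spectral-sequence argument work (while carrying the $\GL(n)$-equivariance) would effectively force you to reprove exactly the paper's Lemma~\ref{l:ind_proj} (projectivity kills higher $\Ext$ on the $\GL(k)$-leg) and Proposition~\ref{p:ep=0} (non-compact center kills $\EP$ on the $\GL(m-k)$-leg). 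The $k=0$ case likewise does not need the Kazhdan/Schneider--Stuhler elliptic pairing --- which is stated for groups with compact (or trivial) center and so would itself require normalization for $\GL(m)$ --- since $\GL(m)$ has non-compact center and Proposition~\ref{p:ep=0} applies immediately. Finally, the additivity of $\EP(-,\pi)^\infty$ across the filtration is not automatic: it relies on the long exact sequence surviving the passage to smooth vectors (Lemma~\ref{l:longexact}, and Proposition~\ref{p:epomega} for finite length), which is a substantive technical point, not a formality.
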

We refer to results of this type as {\it the theta correspondence for
  dummies}: replacing $\Hom$ with $\EP$ makes this more elementary
statement true.  See \cite{dummies}.
Hopefully such easy results for $\EP$,
together with vanishing results for higher $\Ext$
groups, will give results about $\Theta(\pi)$ and $\theta(\pi)$.
See Question \ref{q:question} and Example \ref{ex:gln}.
Moreover  non-trivial higher $\Ext$ groups may help to clarify the structure of $\Theta(\pi)$. 

Similar results hold for type I dual pairs. In order to keep the
notation simple we consider only the case of orthogonal-symplectic
dual pairs, see Section \ref{s:typeI}.
 It is clear that the proofs go
through for general type I dual pairs, using \cite{mvw}. 

It would be interesting to consider the case of real groups, say in
the context of $({\mathfrak g},K)$-modules, where we expect results
similar to what we obtain here for $p$-adic groups.

\sec{Some background}
\label{s:background}

In this section we introduce notation and prove some basic results needed later on. For background on representations of $p$-adic groups,  see \cite{casselman_padic} and \cite{rumelhart_bernstein}.

Suppose $G$ is a $p$-adic group. Let $\Cat=\Cat_G$ be the category of smooth representations of $G$, 
and let $\Ob$ denote the objects of this category. For $X$ any $G$-module, let $X^\infty\in\Ob$ be the 
submodule of smooth vectors.
It is a union of $X^K$, the space of $K$-fixed vectors, as $K$ runs over all open compact subgroups of $G$. 
In some cases $X$ is a module for two different groups $G$ and $H$, in which case we will 
be specific about denoting $X^\infty$ as $X^{G-\infty}$ or $X^{H-\infty}$. 
We work entirely in the setting of smooth representations.

For $Y\in\Ob$ let  $Y^*=\Hom_\C(Y,\C)$ be the algebraic dual, and let 
$Y^\vee=(Y^*)^{\infty}\in\Ob$ be the smooth dual.
For $H$  a closed subgroup of $G$, let $\Res^G_H$ be the restriction functor from 
$\Cat_G$ to $\Cat_H$, let $\Ind_H^G$  be  (smooth) induction from $\Cat_H$ to $\Cat_G$, 
and let $\ind_H^G$ be compact induction.

\begin{lemma}
\label{l:frob1} Let $K$ be an open compact subgroup of $G$. 
Suppose $X\in\ObK$ and $Y\in\ObG$. Then 
\begin{enumerate}
\item[(1)] $\Hom_G(\ind_K^G(X),Y)\simeq \Hom_K(X,\Res^G_K(Y))$. 
\item[(2)] $\ind_K^G(X)$ is a projective $G$-module. 
\end{enumerate}
\end{lemma}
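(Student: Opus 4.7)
The plan is to prove (1) by explicit construction of the Frobenius isomorphism, and then derive (2) as a formal consequence using semisimplicity of $\Cat_K$.

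For (1), the canonical map $\Phi\colon\Hom_G(\ind_K^G(X),Y)\to\Hom_K(X,Y)$ will be given by precomposition with the $K$-equivariant embedding $\iota\colon X\hookrightarrow\ind_K^G(X)$ sending $x$ to the function supported on $K$ whose value at $k\in K$ is $kx$. To construct the inverse, I would observe that every $f\in\ind_K^G(X)$ is a finite sum of $G$-translates of elements in the image of $\iota$, because $K$ is open and the support of $f$ is a union of finitely many cosets of $K$; writing $f=\sum_i g_i\cdot \iota(x_i)$ and, for $\psi\in\Hom_K(X,Y)$, setting $\tilde\psi(f):=\sum_i g_i\cdot\psi(x_i)$ then defines the inverse. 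Well-definedness (independence of the decomposition) and $G$-equivariance follow routinely from the $K$-equivariance of $\psi$, and naturality in $Y$ is immediate.

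For (2), by (1) the functor $\Hom_G(\ind_K^G(X),-)$ on $\Cat_G$ factors as $\Hom_K(X,-)\circ\Res^G_K$, so it suffices to show both factors are exact. The restriction functor $\Res^G_K$ is manifestly exact, so projectivity of $\ind_K^G(X)$ reduces to exactness of $\Hom_K(X,-)$ on $\Cat_K$. The latter holds because $\Cat_K$ is a semisimple category: given a short exact sequence of smooth $K$-modules and any $\C$-linear splitting $s$, the averaged map $\tilde s(c)=\int_K k\cdot s(k\inv c)\,dk$ against normalized Haar measure on $K$ is a $K$-equivariant splitting, the integrand being locally constant on the smooth vectors and $K$ being compact, so the integral reduces to a finite sum on any given $c$.

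Neither step should present a serious obstacle; both parts are foundational material on smooth representations of $p$-adic groups. The only point requiring a little care is the well-definedness of $\tilde\psi$ in (1), which amounts to checking that two decompositions of the same $f$ as a sum of $G$-translates of $\iota(X)$ differ by relations arising from the identity $\iota(kx)=k\cdot\iota(x)$ for $k\in K$, and $\psi$ is by hypothesis $K$-equivariant on exactly those relations.
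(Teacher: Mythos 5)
Your proof is correct and follows the same route as the paper's: establish Frobenius reciprocity and then deduce projectivity from exactness of $\Hom_K(X,-)\circ\Res^G_K$. The paper states the implication in one line, leaving the semisimplicity of $\Cat_K$ (your averaging argument) implicit, whereas you spell out both the explicit construction of the isomorphism in (1) and the splitting argument in (2); the content is the same.
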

\begin{proof} The first statement is a standard version of  Frobenius reciprocity; it implies that $\Hom_G(\ind_K^G(X), - )$ is an exact functor, so
$\ind_K^G(X)$ is projective. 
\end{proof} 

\begin{lemma} 
\label{l:dual_proj} Let $X$ be a smooth module for $G\times H$. 
If $X^K$ is a projective $H$-module for every 
open compact subgroup $K$ of $G$ then $X$ is a projective $H$-module. 
\end{lemma}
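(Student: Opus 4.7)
The plan is to realize $X$ as a direct sum of projective smooth $H$-modules, one summand for each step of the natural filtration $X^{K_1}\subset X^{K_2}\subset\cdots$ by fixed-vector subspaces. Projectivity of $X$ will then follow formally from the fact that direct summands and direct sums of projectives are projective.

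The key observation I would begin with is that for any open compact subgroup $K\subset G$, the characteristic idempotent $e_K\in\caH(G)$ acts on $X$ as a projector with image $X^K$, and because the $G$- and $H$-actions on $X$ commute, this operator is $H$-linear. Consequently, whenever $K'\subset K$ are open compact subgroups of $G$, the inclusion $X^K\hookrightarrow X^{K'}$ splits in the category of smooth $H$-modules, with complement $(1-e_K)X^{K'}$.

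Next, I would fix a decreasing cofinal sequence $K_1\supset K_2\supset\cdots$ of open compact subgroups of $G$, which exists since $G$ is $p$-adic, so that $X=\bigcup_i X^{K_i}$ by smoothness. Set $W_1:=X^{K_1}$ and $W_i:=(1-e_{K_{i-1}})X^{K_i}$ for $i\geq 2$. An easy induction using the splittings above shows $X^{K_n}=W_1\oplus\cdots\oplus W_n$ as $H$-modules, and passing to the union gives a decomposition $X=\bigoplus_{i\geq 1} W_i$ of smooth $H$-modules.

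Finally, each $W_i$ is by construction an $H$-direct summand of $X^{K_i}$, which is projective by hypothesis, so each $W_i$ is projective, and hence so is $X$. There is no real obstacle: the entire argument hinges on the single observation that the Hecke-algebra projectors $e_K$ automatically commute with the $H$-action, which upgrades the filtration by fixed subspaces into a genuine decomposition into projective $H$-summands.
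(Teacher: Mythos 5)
Your proof is correct, and it arrives at the same structural conclusion as the paper's argument --- namely that $X$ decomposes as a direct sum of smooth $H$-modules, each of which is an $H$-module direct summand of some $X^K$ --- but it gets there by a different decomposition. You filter $X$ by the nested fixed-vector spaces $X^{K_1}\subset X^{K_2}\subset\cdots$ along a cofinal decreasing chain and telescope, using the $H$-equivariance of the Hecke idempotents $e_{K_i}$ to split each inclusion. The paper instead fixes a \emph{single} open compact subgroup $K$ once and for all, decomposes $X$ into its $K$-isotypic components $X=\bigoplus_\tau X_\tau$ (possible since $K$ is compact and $X$ is smooth), and observes that each $X_\tau$ is a direct summand of $X^{K_\tau}$ where $K_\tau=\ker\tau$. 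The paper's route is marginally cleaner: it avoids choosing a cofinal chain, and hence sidesteps the (harmless but real) reliance on $G$ having a countable neighborhood basis at the identity; it also makes no mention of Hecke-algebra idempotents, though the $H$-equivariance of the $K$-isotypic projection is doing the same work implicitly. Your version has the minor virtue of making the $H$-linearity of the splitting completely explicit via $e_K$. Both are sound; the gap-free content is the same observation that the $G$-side averaging operators commute with $H$.
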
 
\begin{proof} 
Fix an open compact subgroup $K$ of $G$. 
For $\tau$,  a smooth  irreducible  representation of $K$, 
let $X_\tau$ be the $\tau$-isotypic subspace of $X$. Let  $K_\tau$ be the kernel of $\tau$. 
Then $X_\tau$ is a direct summand of $X^{K_{\tau}}$ and is, therefore, projective. 
Furthermore $X=\oplus_{\tau} X_\tau$ is a direct sum of projective modules, hence projective.
\end{proof} 

\begin{lemma}
\label{l:ind_proj}
 Let $Q$ be a closed subgroup of  $G$ such that $Q\backslash G$ is compact.  Let
$X$ be a smooth representation of $Q \times H$, projective as $H$-module. Then $\Ind_Q^G X$, with the natural action of $H$, is 
a projective  $H$-module. 
\end{lemma}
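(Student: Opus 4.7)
The plan is to reduce to Lemma \ref{l:dual_proj}: it suffices to show that for every open compact subgroup $K$ of $G$, the space of $K$-invariants $(\Ind_Q^G X)^K$ is projective as an $H$-module. Once this is established, Lemma \ref{l:dual_proj} directly gives projectivity of $\Ind_Q^G X$ as an $H$-module.

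The first step is a Mackey-style description of the $K$-invariants. A vector in $\Ind_Q^G X$ is a function $f \colon G \to X$ satisfying $f(qg) = q \cdot f(g)$, and right $K$-invariance pins such an $f$ down by its values on a set of representatives for the double coset space $Q \backslash G / K$. Since $Q \backslash G$ is compact and $K$ is open, $Q \backslash G / K$ is finite; picking representatives $g_1,\dots,g_r$, the usual calculation shows
$$
(\Ind_Q^G X)^K \; \cong \; \bigoplus_{i=1}^r X^{Q \cap g_i K g_i^{-1}},
$$
since the compatibility constraint at $g_i$ demands exactly that $f(g_i)$ be fixed by $Q \cap g_i K g_i^{-1}$. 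The $H$-action, which commutes with the $Q$-action, preserves this decomposition summand by summand.

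The second step is to verify that each summand $X^{K'}$, where $K' = Q \cap g_i K g_i^{-1}$ is a compact open subgroup of $Q$, is projective as an $H$-module. Because $X$ is smooth as a $Q$-module and $K'$ is compact, the idempotent $e_{K'} = \int_{K'} q\, dq$ (for a suitably normalized Haar measure) defines a projection of $X$ onto $X^{K'}$. Since the actions of $Q$ and $H$ on $X$ commute, this projection is $H$-equivariant, so $X^{K'}$ is a direct summand of $X$ as an $H$-module. The hypothesis that $X$ is $H$-projective then forces the summand $X^{K'}$ to be $H$-projective as well.

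Combining the two steps, $(\Ind_Q^G X)^K$ is a finite direct sum of projective $H$-modules, hence projective; invoking Lemma \ref{l:dual_proj} finishes the argument. No step looks substantively hard: the only thing requiring a little care is checking the double coset decomposition of $K$-invariants and making sure the $H$-equivariance of the averaging projection genuinely uses that the $Q$- and $H$-actions commute in the $Q \times H$-structure on $X$.
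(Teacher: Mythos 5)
Your proof is correct and follows essentially the same approach as the paper: reduce to Lemma \ref{l:dual_proj}, describe $(\Ind_Q^G X)^K$ via evaluation at double coset representatives as $\bigoplus_i X^{Q\cap g_iKg_i^{-1}}$, and observe that each summand is an $H$-equivariant direct summand of $X$, hence $H$-projective. You simply spell out two points the paper leaves implicit, namely the finiteness of $Q\backslash G/K$ and the $H$-equivariance of the averaging projection.
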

\begin{proof} By Lemma \ref{l:dual_proj}, it suffices to prove that $(\Ind_Q^G X)^K$ is $H$-projective for every open compact subgroup $K$ of $G$. Write 
$G=\cup_i Q g_i K$ for a finite set of elements $g_i$ in $G$.  Let $K_i=g_i K g_i^{-1} \cap Q$. Then 
\[ 
(\Ind_Q^G X)^K= \oplus_i X^{K_i} 
\] 
where the isomorphism is given by evaluating $f\in \Ind_Q^G X$ at the points $g_i$. 
The Lemma follows  since $X^{K_i}$ are summands of $X$ and hence $H$-projective. 
\end{proof} 

Let $\S(G)$ be the Schwartz space of the locally constant compactly supported functions on $G$.
This is a module for $G\times G$ by the left and right translation actions.

\begin{lemma}
\label{l:S(G)}
\begin{enumerate}
\item[(1)] $\S(G)$ is a projective module for the right action of $G$.
\item[(2)] For any smooth (left) $G$-module $X$, $\Hom_G(\S(G),X)^{\infty}\simeq X$  as (left) $G$-modules, where 
$\Hom_G(\S(G),X)$ is
defined to be the space of homomorphisms $\lambda: \S(G) \rightarrow X$ with $\lambda(R_g f) = g\lambda(f)$ for all $f \in \S(G)$. 
\end{enumerate}
\end{lemma}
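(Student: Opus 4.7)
The two parts have quite different flavors, so I would handle them separately.

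For (1), I would reduce directly to Lemmas \ref{l:dual_proj} and \ref{l:frob1}(2). View $\S(G)$ as a smooth module for $G\times G$ via left and right translation. For any open compact $K$ in the first copy of $G$, the subspace of left-$K$-invariants is $\S(K\backslash G)$, which as a module under right translation by the second copy of $G$ is canonically isomorphic to $\ind_K^G(\C)$. By Lemma \ref{l:frob1}(2) this is projective, so Lemma \ref{l:dual_proj} immediately gives that $\S(G)$ is projective for the right action of $G$.

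For (2), I would construct mutually inverse $G$-equivariant maps between $X$ and $\Hom_G(\S(G),X)^\infty$, where the $G$-action on $\Hom_G(\S(G),X)$ is induced by the left-translation action on $\S(G)$ (which commutes with $R$ and therefore descends to $\Hom_G$). In one direction, define
\[ \Psi(v)(f) \;=\; \int_G f(g^{-1})\, g\cdot v \, dg, \]
which is a finite sum for smooth $v$ and $f\in\S(G)$. Routine changes of variables show $\Psi(v)(R_h f) = h\cdot \Psi(v)(f)$, that $\Psi$ is $G$-equivariant under the left-translation action, and that $\Psi(v)$ is fixed by any open compact subgroup fixing $v$, so $\Psi$ lands in $\Hom_G(\S(G),X)^\infty$. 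In the other direction, define
\[ \Phi(\lambda) \;=\; \mu(K)^{-1}\,\lambda(\mathbf{1}_K), \]
for any open compact $K$ small enough that $\lambda$ is left-$K$-fixed. Independence of $K$ follows by writing $K=\bigsqcup_i h_iK'$ for $K'\subset K$, so that $\mathbf{1}_K = \sum_i L_{h_i}\mathbf{1}_{K'}$, and using the $K$-invariance of $\lambda$ to conclude $\lambda(\mathbf{1}_K) = [K:K']\lambda(\mathbf{1}_{K'})$.

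The identities $\Phi\circ\Psi=\mathrm{id}_X$ and $\Psi\circ\Phi=\mathrm{id}$ are then straightforward. The first is a one-line computation on $v\in X^K$: $\Psi(v)(\mathbf{1}_K)=\int_K g v\,dg = \mu(K)v$. For the second, given $\lambda$ with $v=\Phi(\lambda)$ (using some $K$), it suffices, by letting $K$ shrink, to verify $\Psi(v)$ and $\lambda$ agree on the characteristic functions $\mathbf{1}_{Kh}$ of right cosets, which span $\S(K\backslash G)$. Using the identity $\mathbf{1}_{Kh} = R_{h^{-1}}\mathbf{1}_K$ together with the intertwining property $\lambda(R_{h^{-1}}f) = h^{-1}\lambda(f)$, both sides evaluate to $\mu(K)\,h^{-1}v$. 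The main technical point throughout is keeping the Haar measure and translation conventions consistent; once pinned down, every step is a direct manipulation.
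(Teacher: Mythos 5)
Your part (1) is precisely the paper's argument: take left-$K$-invariants to get $\S(K\backslash G)\simeq\ind_K^G(\C)$, apply Lemma~\ref{l:frob1}(2) and then Lemma~\ref{l:dual_proj}. For part (2) you arrive at the same isomorphism as the paper --- your $\Phi(\lambda)=\mu(K)^{-1}\lambda(\mathbf{1}_K)$ is exactly the map $T$ the paper writes down --- but where the paper derives it from the chain $\Hom_G(\S(G),X)^K\simeq\Hom_G(\ind_K^G\C,X)\simeq\Hom_K(\C,X)\simeq X^K$ via Frobenius reciprocity and then observes compatibility in $K$, you instead exhibit the explicit inverse $\Psi(v)(f)=\int_G f(g^{-1})\,g\cdot v\,dg$ and verify the intertwining and inverse identities directly; this is a correct and self-contained variant of the same underlying argument.
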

\begin{proof} Projectivity of $\S(G)$ is usually attributed to P. Blanc \cite{blanc}. We give an independent and rather simple proof. 
Let $K$  be an open compact subgroup of $G$, acting from the left. 
Then $\S(G)^K = \ind_{K}^G(\C)$ and this is projective by Lemma \ref{l:frob1}. 
 Hence $\S(G)$ is a projective $G$-module by Lemma \ref{l:dual_proj}. 

We now prove (2). For any compact open subgroup $K$ of $G$ we have
$$
\begin{aligned}
\Hom_{G}(\S(G),X)^K&\simeq \Hom_{G}(\S(K\backslash G),X)\\
&\simeq\Hom_G(\ind_K^G(\C), X)\\
&\simeq\Hom_K(\C, X) \\
&\simeq X^K.
\end{aligned}
$$
The isomorphism  $\Hom_{G}(\S(G),X)^K\simeq X^K$  is 
given  by $\varphi \mapsto \frac{1}{vol(K)}\varphi(1_K)$, where $1_K$ is the characteristic function of $K$.

These isomorphisms as $K$ varies are compatible, and therefore give an isomorphism  
$T: \Hom_{G}(\S(G),X)^\infty \simeq X$, defined by $T(\varphi) = \frac{1}{vol(K)} \varphi(1_K)$ for
$\varphi \in \Hom_{G}(\S(G), X)^{\infty}$, where $K$ is any compact open subgroup of $G$ which leaves $\varphi$ invariant. 
\end{proof}

Now assume that $G$ is reductive, $P=MN$ is a parabolic subgroup of $G$, 
and $\delta_P$ is  the modulus character of $P$:
$\delta_P(mn)=|\det(\Ad_{\frn}(m))|$ \cite[3.1]{casselman_padic}
For $X\in\ObM$ we write $i_P^G(X)$ for normalized induction ($X$ is pulled back to $P$):
$i_P^G(X)=\Ind_P^G(\delta_P^{\frac12}X)$.
Then $i_P^G$ preserves unitarity, and 
\begin{equation}
\label{e:inddual}
i_P^G(X)^\vee=i_P^G(X^\vee).
\end{equation}
For $X\in \ObG$ write $r^G_P(X)\in\ObM$ for the normalized Jacquet module of $X$:
$r^G_P(X)=H_0(N,X)\delta_P^{-\frac12}$.

\begin{lemma} \label{L:dual_pair_corr} 
Let $P=MN$ be a parabolic subgroup of  $G$. Let $V$ be an $M\times H$-module. Then, for every smooth $H$-module $U$, with trivial action of $G$, 
 we have the following natural isomorphism of smooth $G$-modules 
\[ 
 \Hom_{H} (i_{P}^{G} (V), U)^{\infty} \cong i_{P}^{G} (\Hom_{H}(V,U)^{M-\infty}). 
 \]  
\end{lemma}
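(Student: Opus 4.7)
The plan is to construct mutually inverse natural $G$-equivariant maps between the two sides. For the map $\Psi$ from right to left, view $F$ in $i_P^G(\Hom_H(V,U)^{M-\infty})$ as a locally constant function $F\colon G \to \Hom_H(V,U)$ with $F(pg) = \delta_P^{1/2}(p)\, p \cdot F(g)$, where $M$ (and hence $P$) acts on $\Hom_H(V,U)$ via its action on $V$. For $f \in i_P^G V$ satisfying the analogous rule, the composite $g \mapsto F(g)(f(g))$ is a $U$-valued function on $G$ transforming as $F(pg)(f(pg)) = \delta_P(p)\, F(g)(f(g))$, which is precisely the transformation law for a section of the density bundle on the compact quotient $P\backslash G$. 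Integration yields
\[
\Psi(F)(f) = \int_{P\backslash G} F(g)(f(g))\, dg \in U.
\]
The $H$-equivariance of $\Psi(F)$ is immediate because each $F(g)$ commutes with $H$; the $G$-equivariance of $\Psi$ follows from the invariance of the integration; and $\Psi(F)$ is $G$-smooth since any open compact $K \subset G$ right-fixing $F$ also right-fixes $\Psi(F)$.

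For the inverse map $\Phi$, given $\lambda \in \Hom_H(i_P^G V, U)^\infty$ fixed by an open compact $K \subset G$, and given $g \in G$ and $v \in V$, shrink $K$ if necessary so that $P \cap gKg^{-1}$ acts trivially on $v$ via $\delta_P^{1/2}$ times the $M$-action (possible because $V$ is smooth as an $M$-module and $\delta_P$ is trivial on compact open subgroups). Let $f_{g,v,K} \in i_P^G V$ be the unique function supported on $PgK$ with $f_{g,v,K}(pgk) = \delta_P^{1/2}(p)\, p \cdot v$ for $p \in P$, $k \in K$, and set
\[
\Phi(\lambda)(g)(v) = c_K\, \lambda(f_{g,v,K})
\]
for a suitable normalizing constant $c_K$ (measuring the image of $K$ in $P\backslash G$) chosen so that the value is independent of admissible $K$. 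Independence then follows from the $K$-invariance of $\lambda$ combined with the identity expressing $f_{g,v,K'}$, for $K \subset K'$ both admissible, as a sum of right-translates of $f_{g,v,K}$.

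One verifies that $\Phi(\lambda)(g) \in \Hom_H(V,U)$ is $H$-equivariant and $M$-smooth (invariant under the open compact $M \cap gKg^{-1}$), that $\Phi(\lambda)$ transforms correctly under $P$ to lie in $i_P^G(\Hom_H(V,U)^{M-\infty})$, and that $\Phi$ is $G$-equivariant with smooth image. The equalities $\Psi \circ \Phi = \mathrm{id}$ and $\Phi \circ \Psi = \mathrm{id}$ then follow by direct substitution, using that any right $K$-invariant $f \in i_P^G V$ is a finite sum of building blocks $f_{g_i,v_i,K}$ as $g_i$ ranges over representatives of $P\backslash G/K$. The main technical obstacle is fixing the normalization $c_K$ so that $\Phi$ is genuinely well-defined independently of $K$, and carefully bookkeeping the $\delta_P^{1/2}$-factors throughout the equivariance and smoothness checks; compactness of $P \backslash G$ ensures all sums and integrals involved are finite and the construction goes through.
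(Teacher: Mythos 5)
Your forward map $\Psi(F)(f)=\int_{P\backslash G}F(g)(f(g))\,dg$ is exactly the one the paper uses, so the core construction agrees. The paper, however, takes a leaner route to bijectivity: it first observes that the desired $\Hom_H$-statement follows by taking $H$-invariants of the $G\times H$-equivariant isomorphism $\Hom_{\mathbb C}(i_P^G V,U)^\infty\simeq i_P^G\bigl(\Hom_{\mathbb C}(V,U)^{M-\infty}\bigr)$, so $H$ drops out of the argument entirely; it then notes that this last statement with $U=\mathbb C$ is the familiar $i_P^G(V)^\vee\simeq i_P^G(V^\vee)$, that the same proof works for general $U$, and that one checks the map is an isomorphism at the level of $K$-fixed vectors for each open compact $K$. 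Your alternative of exhibiting an explicit inverse $\Phi$ is viable but more laborious, and as written has a normalization slip: the constant you call $c_K$ must also depend on $g$ --- the correct factor is $\mathrm{vol}(P\backslash PgK)^{-1}$, which varies with the double coset $PgK$ --- so as stated $\Phi$ is not quite well defined. Similarly, the claim that $f_{g,v,K'}$ decomposes as a sum of right-translates of $f_{g,v,K}$ (for $K\subset K'$) is literally correct only after shrinking $K$ to be normal in $K'$, which costs nothing but should be said. These are repairable bookkeeping issues, and once fixed the remaining equivariance and smoothness checks go through. On balance, reducing to $\Hom_{\mathbb C}$, recognizing the standard duality for induction, and verifying on $K$-fixed vectors avoids these pitfalls and is the more economical argument.
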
 
\begin{proof} 
It suffices to show that we have an isomorphism of $G\times H$-modules 
\[ 
 \Hom_{\mathbb C} (i_{P}^{G} (V), U)^{\infty} \cong  i_{P}^{G} (\Hom_{\mathbb C}(V,U)^{M-\infty})
 \] 
 and then the proposition follows by taking $H$-fixed vectors on both sides. Note that, if $U=\mathbb C$,  this is the well known statement 
 $i_P^G (V)^{\vee}\cong i_P^G(V^{\vee})$. The proof is the same.  More precisely,  $f\in i_{P}^{G} (\Hom_{\mathbb C}(V,U)^{M-\infty})$ 
 defines $\ell_f \in  \Hom_{\mathbb C} (i_{P}^{G} (V), U)^{\infty}$ by 
 \[ 
 \ell_f(f')= \int_{P\backslash G} f(g)(f'(g)) ~dg
 \] 
 for every $f'\in i_{P}^{G} (V)$.  One checks that $f\mapsto \ell_f$ is an isomorphism by doing so at the level of $K$-fixed vectors. The map is 
 determined by fixing a measure on $G$. 
\end{proof}

\sec{Euler-Poincare characteristic}

For  background on statements in this section see \cite{prasadext}. Assume that $G$ is a reductive group.

\begin{lemma}
\label{l:basic}
The category  $\Cat$ of smooth representations of $G$ has enough projectives and enough injectives.
Therefore, for $X,Y\in\Ob$,  we can define the complex vector spaces $\Ext_G^i(X,Y)$ for all $i\ge 0$ with the following
properties: 

\begin{enumerate}
\item $\Ext^0_G(X,Y)\simeq\Hom_G(X,Y)$;
\item $\Ext_G^i(X,Y)=0$ if $Y$ is injective. In general, 
$\Ext_G^i(X,Y)$ can be computed using an injective resolution of $Y$;
\item $\Ext_G^i(X,Y)=0$ if $X$ is projective. In general, 
$\Ext_G^i(X,Y)$ can be computed using a projective resolution of $X$.
\end{enumerate}
\end{lemma}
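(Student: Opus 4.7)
The plan is to exhibit enough projectives and enough injectives in $\Cat = \Cat_G$; once this is done, $\Ext^i_G(X,Y)$ can be constructed via the standard derived-functor machinery (either projective resolutions of $X$ or injective resolutions of $Y$), and properties (1)--(3) follow immediately from the definition of derived functors. Enough projectives will be an easy consequence of the results already established in Section \ref{s:background}; the main obstacle will be enough injectives.

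For enough projectives, given $X \in \Ob$ and $v \in X$, smoothness furnishes an open compact subgroup $K_v \subset G$ fixing $v$. By Frobenius reciprocity (Lemma \ref{l:frob1}(1)), the $K_v$-equivariant map $\C \to X$ sending $1$ to $v$ extends to a $G$-equivariant map $\ind_{K_v}^G(\C) \to X$; summing over all $v \in X$ produces a surjection $\bigoplus_v \ind_{K_v}^G(\C) \twoheadrightarrow X$, whose domain is projective by Lemma \ref{l:frob1}(2), since a direct sum of projectives is projective.

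For enough injectives, fix any open compact subgroup $K$ of $G$. Since $K$ is profinite, every smooth $K$-representation decomposes as a direct sum of finite-dimensional isotypic components for its irreducible smooth $K$-subrepresentations, so $\Cat_K$ is semisimple and every object is injective. Smooth induction $\Ind_K^G$ is right adjoint to the exact restriction functor $\Res^G_K$, and hence carries injectives in $\Cat_K$ to injectives in $\Cat_G$. The unit of adjunction $X \to \Ind_K^G \Res^G_K X$ sends $v \in X$ to the function $g \mapsto g v$ (smooth in $g$ because $v$ has an open compact stabilizer in $G$) and is injective, since evaluation at $1$ recovers $v$; thus every $X \in \Ob$ embeds into an injective $G$-module. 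The only real subtlety in the argument is this preservation of injectivity by $\Ind_K^G$, which rests on the Frobenius adjunction $\Hom_G(-, \Ind_K^G Y) \cong \Hom_K(\Res^G_K -, Y)$ together with the exactness of restriction.
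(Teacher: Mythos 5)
The paper states this lemma without giving its own proof; it simply points to \cite{prasadext} for background. Your argument is a correct and standard one, and I see no gap. Enough projectives follows exactly as you say: for each $v\in X$ with stabilizer $K_v$, Lemma~\ref{l:frob1}(1) turns the $K_v$-map $\C\to X$, $1\mapsto v$, into a $G$-map $\ind_{K_v}^G(\C)\to X$, and summing gives a surjection from the projective module $\bigoplus_v \ind_{K_v}^G(\C)$. For enough injectives, your route via $\Ind_K^G$ is the usual one: $\Cat_K$ is semisimple because $K$ is profinite, $\Ind_K^G$ preserves injectives because it is right adjoint to the exact functor $\Res^G_K$, and the unit $v\mapsto(g\mapsto gv)$ embeds $X$ into $\Ind_K^G\Res^G_K X$. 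The one thing worth flagging is that the adjunction you use for injectives, $\Hom_G(-,\Ind_K^G Y)\cong\Hom_K(\Res^G_K -,Y)$, is the right-adjoint version of Frobenius reciprocity, which is a distinct statement from the compact-induction (left-adjoint) version recorded in Lemma~\ref{l:frob1}(1); you apply it correctly, but since the paper only records the latter, you should either cite the former separately or prove it. Once both kinds of resolutions are available, properties (1)--(3) follow from the standard balancing of $\Ext$, as you note.
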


Now suppose $P=MN$ is a parabolic subgroup of $G$.
Let $\overline P=M\overline N$ be the opposite parabolic.
We have two versions of Frobenius reciprocity. 
\begin{lemma} For $X\in\ObG$  and  $Y\in\ObM$ 
\hfil
\label{l:frob2} 
\begin{enumerate}
\item $\Ext_G^i(X,i_P^G(Y))\simeq \Ext^i_M(r^G_P(X),Y)$
\item $\Ext_G^i(i_P^G(X),Y)\simeq \Ext^i_M(X,r_{\overline P}^G(Y))$
\end{enumerate}
\end{lemma}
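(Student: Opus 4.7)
The plan is to derive both statements from the corresponding level-zero Frobenius reciprocities, using that normalized parabolic induction and normalized Jacquet functors are exact on smooth representations, and that right adjoints of exact functors preserve injectives while left adjoints of exact functors preserve projectives.

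For (1), the classical Frobenius reciprocity gives an adjunction $(r^G_P, i_P^G)$. Since $r^G_P$ is exact (the functor of $N$-coinvariants is exact, and the $\delta_P^{-1/2}$-twist is harmless), its right adjoint $i_P^G$ sends injectives to injectives. I would take an injective resolution $Y \to I^\bullet$ in $\Cat_M$, then apply $i_P^G$, which is itself exact, to obtain an injective resolution $i_P^G(Y) \to i_P^G(I^\bullet)$ in $\Cat_G$. Since each $i_P^G(I^k)$ is injective, this resolution can be used to compute $\Ext^i_G(X, i_P^G(Y))$:
$$\Ext^i_G(X, i_P^G(Y)) \simeq H^i\!\left(\Hom_G(X, i_P^G(I^\bullet))\right) \simeq H^i\!\left(\Hom_M(r^G_P(X), I^\bullet)\right) \simeq \Ext^i_M(r^G_P(X), Y),$$
where the middle isomorphism is Frobenius reciprocity applied termwise.

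For (2), I would argue symmetrically from Bernstein's second adjointness $\Hom_G(i_P^G(X), Y) \simeq \Hom_M(X, r^G_{\overline P}(Y))$. Here $i_P^G$ is left adjoint to the exact functor $r^G_{\overline P}$, so $i_P^G$ preserves projectives. Taking a projective resolution $P^\bullet \to X$ in $\Cat_M$ and applying the exact functor $i_P^G$ yields a projective resolution $i_P^G(P^\bullet) \to i_P^G(X)$ in $\Cat_G$, and one computes
$$\Ext^i_G(i_P^G(X), Y) \simeq H^i\!\left(\Hom_G(i_P^G(P^\bullet), Y)\right) \simeq H^i\!\left(\Hom_M(P^\bullet, r^G_{\overline P}(Y))\right) \simeq \Ext^i_M(X, r^G_{\overline P}(Y)).$$

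The only substantive input is Bernstein's second adjointness used for (2); given that, both arguments are essentially formal manipulations of derived functors. The main obstacle, were one to attempt a self-contained development, would be this second adjointness theorem, but since the paper allows citing \cite{casselman_padic} and \cite{rumelhart_bernstein} for background on $p$-adic representation theory, it may simply be invoked. It is worth remarking that each statement admits a dual proof (resolve the other variable): for (1) one may instead take a projective resolution of $X$, apply the exact and projective-preserving functor $r^G_P$, and conclude via Frobenius; for (2) one may resolve $Y$ injectively and use that $r^G_{\overline P}$ preserves injectives. These alternative resolutions give the same spaces, which is a useful compatibility to keep in mind for later computations of the Euler--Poincar\'e characteristic.
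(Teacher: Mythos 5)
Your argument is correct, and it matches the standard derivation: the paper states this lemma without proof, referring the reader to \cite{prasadext} for background, and the route via the exact adjoint pairs $(r^G_P, i_P^G)$ and $(i_P^G, r^G_{\overline P})$, combined with the fact that left (resp.\ right) adjoints of exact functors preserve projectives (resp.\ injectives), is precisely the argument one finds there. Your closing remark that each isomorphism can alternatively be obtained by resolving the other variable is also correct, since all four of $i_P^G$, $r^G_P$, $r^G_{\overline P}$ are exact and each sits on both sides of an adjunction with an exact partner.
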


We also need a version of the Kunneth formula \cite{prasadext}.

\begin{lemma}
\label{l:kunneth}
Suppose $G_1,G_2$ are  reductive $p$-adic groups, 
and $X_i,Y_i$ are smooth representations of $G_i$. 
Furthermore assume that $X_1$ is admissible. Then
$$
\Ext^i_{G_1\times G_2}(X_1\bx X_2,Y_1\bx Y_2)\simeq\bigoplus_{j+k=i}
\Ext^j_{G_1}(X_1,Y_1)\otimes \Ext^k_{G_2}(X_2,Y_2)
$$
\end{lemma}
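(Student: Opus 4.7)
The plan is to compute both sides by taking the exterior tensor product of projective resolutions and applying the Künneth theorem for complexes of $\C$-vector spaces.

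First, I would choose projective resolutions $P_\bullet \to X_1$ in $\Cat_{G_1}$ and $Q_\bullet \to X_2$ in $\Cat_{G_2}$, with each term a direct sum of compactly induced modules of the form $\ind_K^{G_\ell}(\sigma)$ where $K$ is an open compact subgroup and $\sigma$ is a finite-dimensional smooth $K$-module. Since every smooth representation surjects onto such a direct sum, such resolutions exist.

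Second, I would form the total complex $T_\bullet$ of the double complex $P_\bullet \bx Q_\bullet$ and verify that $T_\bullet \to X_1 \bx X_2$ is a projective resolution in $\Cat_{G_1 \times G_2}$. Projectivity of the terms follows from the natural identification
$$\ind_K^{G_1}(\sigma) \bx \ind_{K'}^{G_2}(\tau) \simeq \ind_{K \times K'}^{G_1 \times G_2}(\sigma \bx \tau)$$
together with Lemma \ref{l:frob1}(2). Exactness is the Künneth theorem for complexes of $\C$-vector spaces: tensoring two exact sequences over a field gives an exact sequence.

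Third, I would establish the natural identification of double complexes
$$\Hom_{G_1 \times G_2}(P_\bullet \bx Q_\bullet, Y_1 \bx Y_2) \simeq \Hom_{G_1}(P_\bullet, Y_1) \otimes \Hom_{G_2}(Q_\bullet, Y_2).$$
Termwise, for summands $P_i = \ind_K^{G_1}(\sigma)$ and $Q_j = \ind_{K'}^{G_2}(\tau)$ with $\sigma, \tau$ finite-dimensional, Frobenius reciprocity (Lemma \ref{l:frob1}(1)) reduces the claim to
$$\Hom_{K \times K'}(\sigma \bx \tau, Y_1 \bx Y_2) \simeq \Hom_K(\sigma, Y_1) \otimes \Hom_{K'}(\tau, Y_2),$$
which holds by the finite-dimensionality of $\sigma$ and $\tau$. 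Taking cohomology of the total complex and invoking the Künneth theorem for complexes of $\C$-vector spaces then yields the claimed decomposition of $\Ext^i_{G_1 \times G_2}(X_1 \bx X_2, Y_1 \bx Y_2)$.

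The main obstacle I expect is ensuring that the termwise isomorphism above assembles into a genuine isomorphism of Hom complexes in the presence of the (possibly infinite) direct sums appearing in the projective resolutions, since Hom converts direct sums in the first argument to direct products. This is where the admissibility hypothesis on $X_1$ is essential: it allows one, via Bernstein's decomposition of $\Cat_{G_1}$ into noetherian blocks, to choose $P_\bullet$ built from finitely generated projectives, so that Hom commutes with the relevant direct sums and the double-complex isomorphism holds globally.
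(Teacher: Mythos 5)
The paper offers no proof of this lemma; it simply cites \cite{prasadext}, so your argument can only be assessed on its own merits. The general framework you lay out (projective resolutions by compactly induced modules, the total complex, and K\"unneth for complexes of vector spaces) is the right one, and you have correctly located the crux: identifying $\Hom_{G_1\times G_2}(P_\bullet\boxtimes Q_\bullet,Y_1\boxtimes Y_2)$ with $\Hom_{G_1}(P_\bullet,Y_1)\otimes\Hom_{G_2}(Q_\bullet,Y_2)$ in the presence of infinite direct sums, since $\Hom$ turns sums in the source into products and $\otimes$ does not commute with infinite products.

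Your proposed fix, however, rests on a false claim: admissibility of $X_1$ does \emph{not} imply that $X_1$ admits a resolution by finitely generated projectives. Admissible representations need not be finitely generated: for $G_1=F^\times$ take $X_1=\bigoplus_{n\ge 1}\chi_n$ with $\chi_n$ a character of conductor exactly $n$; then $X_1^{1+\mathfrak p^m}$ is finite dimensional for every $m$, so $X_1$ is admissible, yet it has infinite length and is not finitely generated. Bernstein's decomposition only gives you finite generation of each Bernstein component of $X_1$, not of $X_1$ itself, so $P_i$ is still an infinite direct sum indexed by components and the product-vs-tensor problem persists. (Indeed, for that $X_1$ with $Y_1=X_1$ and the analogous $X_2=Y_2$ on $G_2=F^\times$, already in degree $0$ one gets $\C^{\mathbb N\times\mathbb N}$ on the left and $\C^{\mathbb N}\otimes\C^{\mathbb N}$ on the right, which are not isomorphic; the stated hypothesis is simply too weak, and the intended one is surely that $X_1$ has finite length.) Moreover, even after replacing ``admissible'' by ``finite length'' so that $P_\bullet$ can be taken with finitely generated terms, you still have not controlled the (generally infinite) direct sums in $Q_\bullet$: with $P_i$ finitely generated and $Q_j=\bigoplus_\beta Q_{j,\beta}$ one is comparing $\prod_\beta\bigl(\Hom_{G_1}(P_i,Y_1)\otimes\Hom_{G_2}(Q_{j,\beta},Y_2)\bigr)$ with $\Hom_{G_1}(P_i,Y_1)\otimes\prod_\beta\Hom_{G_2}(Q_{j,\beta},Y_2)$, and these agree only when $\Hom_{G_1}(P_i,Y_1)$ is finite dimensional, i.e.\ when $Y_1$ is admissible, or when $X_2$ can itself be resolved by finitely generated projectives. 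So you need to impose, and then actually use, a second finiteness hypothesis (on $Y_1$ or on $X_2$) to make the double-complex identification go through.
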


See \cite{prasadext}. 
For $X,Y\in\ObG$ {\it assume} that $\Ext_G^i(X,Y)$ are finite dimensional for all $i\ge 0$, and 0 for $i$ large enough. 
Then the Euler-Poincare characteristic is defined to be
\begin{equation}
\EP_G(X,Y)=\sum_i(-1)^i\Ext_G^i(X,Y).
\end{equation}
This is a well-defined 
element of the Grothendieck group of finite dimensional vector spaces. 
If 
$$
0\rightarrow X_1\rightarrow X_2\rightarrow\dots\rightarrow X_n\rightarrow 0
$$
is an exact sequence  of smooth $G$-modules, and $Y\in\ObG$,  then
$$
\sum_j(-1)^j\EP_G(X_j,Y)=\sum_j(-1)^j\EP_G(Y,X_j)=0.
$$
If 
$$
0=X_0\subset X_1\subset\dots\subset X_n=X
$$
is a filtration by smooth $G$-submodules, with successive quotients $W_i=X_i/X_{i-1}$, then
\begin{equation}
\EP(X,Y)=\sum_{i=1}^{n}\EP(W_i,Y),
\quad
\EP(Y,X)=\sum_{i=1}^{n}\EP(Y,W_i).
\end{equation}

\begin{proposition}
\label{p:ep=0}
If $G$ is a reductive group and 
$X,Y$ are smooth $G$-modules of  finite length,  then $\Ext_G^i(X,Y)$ are finite dimensional 
for all $i$ and 0 for $i$ greater than the split rank of $G$. Moreover, if $G$ has non-compact center, then $\EP_G(X,Y)=0$.  
\end{proposition}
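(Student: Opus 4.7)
The plan is to handle the two assertions in turn: finiteness (with the bound $i > d$) via a short projective resolution of $X$, and the Euler--Poincaré vanishing via a central-character argument that exploits the non-compactness of $Z(G)$.

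For the first claim, let $d$ be the split rank of $G$. I would invoke a finite projective resolution
$$
0 \to P_d \to P_{d-1} \to \cdots \to P_0 \to X \to 0
$$
where each $P_j$ is a finite direct sum of modules $\ind_J^G(\sigma)$ with $J \subset G$ open and compact modulo $Z(G)$, and $\sigma$ a finite-dimensional smooth representation of $J$. Such a resolution, of length exactly the split rank, can be produced from the action of $G$ on its Bruhat--Tits building in the style of Schneider--Stuhler. Applying $\Hom_G(-, Y)$ and using Frobenius reciprocity as in Lemma \ref{l:frob1} (whose proof works verbatim for $J$ compact modulo the center), each term becomes $\bigoplus \Hom_J(\sigma, Y)$. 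Picking an open compact subgroup $K \subset J$ on which $\sigma$ is trivial, this space injects into $\sigma^\ast \otimes Y^K$, which is finite dimensional since the finite-length module $Y$ is admissible. The cohomology of this length-$d$ complex of finite-dimensional spaces computes $\Ext_G^i(X,Y)$, delivering both finiteness for every $i$ and vanishing for $i > d$.

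For the second claim, I would first reduce to $X$ and $Y$ irreducible by additivity of $\EP$ in short exact sequences. Let $\chi_X, \chi_Y$ be their central characters. Any $z\in Z(G)$ acts on every $\Ext_G^i(X, Y)$ by both $\chi_X(z)$ (through $X$) and $\chi_Y(z)$ (through $Y$); whenever an $\Ext^i$ is non-zero, the two scalars agree. Hence if some $z\in Z(G)$ distinguishes the two central characters then all $\Ext^i$, and so $\EP$, vanish. In the remaining case $\chi_X=\chi_Y$ on $Z(G)$, I would deform. Because $Z(G)$ is non-compact it admits a non-trivial unramified character, extending to some unramified character $\psi$ of $G$. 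Consider the family $s\mapsto \EP_G(X, Y\otimes\psi^s)$, $s\in\C$. For generic $s$, $X$ and $Y\otimes\psi^s$ have distinct central characters and the previous case gives value $0$; I would then argue this function is constant in $s$, concluding $\EP_G(X,Y)=0$ at $s=0$.

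The main obstacle is establishing this constancy, since individual $\Ext^i$ may jump at special parameters even when the alternating sum stays put. The cleanest route that avoids the issue is to invoke a character-integral formula expressing $\EP_G(X,Y)$ as $\int \theta_X\overline{\theta_Y}$ over the elliptic regular locus of $G$: this locus is empty when $Z(G)$ is non-compact (an elliptic element's centralizer must be compact modulo center, incompatible with a non-compact central torus), so $\EP_G(X,Y)=0$ outright. Either route should suffice, and in both cases the finiteness step above already secures the well-definedness of $\EP_G(X,Y)$ on which the argument rests.
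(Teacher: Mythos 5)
Your treatment of the finiteness and the bound $i\le d$ is fine (the paper simply cites this as known), but the second half has a genuine gap. You yourself flag the deformation argument as incomplete: constancy of $s\mapsto\EP_G(X,Y\otimes\psi^s)$ is exactly what needs proof, since individual $\Ext^i$ can jump. Your fallback, however, does not repair it. The claim that the regular elliptic locus of $G$ is empty when $Z(G)$ is non-compact is false: an elliptic element is by definition one whose centralizer is compact \emph{modulo the center}, so a non-compact center poses no obstruction. For example $\GL(2,\Q_p)$ has non-compact center $\Q_p^\times$ and plenty of regular elliptic elements (those with eigenvalues generating a quadratic extension); their centralizers $E^\times$ are compact modulo $\Q_p^\times$, exactly as required. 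What actually vanishes when $Z(G)$ is non-compact is the Euler--Poincar\'e \emph{measure} appearing in the Kazhdan/Schneider--Stuhler integral, not the domain of integration, and justifying that vanishing is essentially equivalent to the proposition you are trying to prove, so this route is circular rather than a shortcut.

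The paper's argument is more elementary and avoids the elliptic character theory entirely. After reducing to $X,Y$ irreducible (as you also do), it passes to a finite-index normal subgroup $G_0\cong G_1\times G_2$ with $G_1\supseteq G^{\mathrm{der}}$ and $G_2\cong\Z$ sitting inside $Z(G)$; this is where the non-compactness of the center is used, to extract an honest $\Z$ direct factor. Writing $X=U_1\boxtimes U_2$ and $Y=V_1\boxtimes V_2$ along this product, the K\"unneth formula (Lemma \ref{l:kunneth}) gives $\EP_{G_0}(X,Y)=\EP_{G_1}(U_1,V_1)\otimes\EP_{G_2}(U_2,V_2)$, and the second factor is $\EP_{\C[\Z]}$ of two characters of $\Z$, which is zero: either the characters differ and every $\Ext$ vanishes, or they agree and $\Hom=\Ext^1=\C$ cancel. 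To align your approach with this, replace the deformation/integral step for the case $\chi_X=\chi_Y$ with this K\"unneth factorization through a central $\Z$.
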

\begin{proof} The finite dimensionality of $\Ext^i(X,Y)$ and vanishing beyond the split rank are well-known general facts. We only prove the 
vanishing of Euler-Poincare characteristic for groups having non-compact center.
It suffices to prove this statement for a normal subgroup $G_0$ of finite index in $G$ which we assume has the form 
 $G_0=G_1 \times G_2$ where 
$G_1 \supseteq G^{\rm der}$ and $G_2 \subseteq Z(G)$ with $G_2 \cong \Z$. (These groups are not necessarily algebraic.) 
Decomposing $X$ and $Y$ as direct sums of irreducible representations for $G_0$,  it suffices to assume that $X$ and $Y$ themselves are irreducible as $G_0$-modules. 
Write $X= U_1 \otimes U_2$, and $Y=V_1 \otimes V_2$
where $U_1, V_1$ are irreducible modules for $G_1$ which are just the restrictions of the smooth modules $X, Y$ of $G_0$ to $G_1$; $U_2, V_2$ are the one 
dimensional representations on which $G_2$ operates by the central characters for the action of $G_0$ on $X$ and $Y$ respectively restricted to $G_2\simeq\Z \subset Z(G_0)$.
It suffices to prove that 
\[ 
\EP_{G_1\times G_2}(U_1\boxtimes U_2, V_1\boxtimes V_2)=0. 
\] 
By the K\"unneth formula, 
\[ 
\EP_{G_1\times G_2}(U_1\boxtimes U_2, V_1\boxtimes V_2)=\EP_{G_1}(U_1, V_1) \otimes \EP_{G_2}(U_2, V_2)  
\] 
Since $U_2,V_2$ are one dimensional representations of $G_2=\Z$, $\EP_{G_2}(U_2, V_2) =\EP_{\C[\Z]}(U_2, V_2) =0$, and the proposition follows. 
\end{proof}

Suppose that $H$ is another $p$-adic reductive group, $X\in Ob(\Cat_{G\times H})$ and $Y\in Ob(\Cat_H)$. 
Then $G$ acts on 
$\Ext_H^i(X,Y)$ via its action on $X$. This module is not necessarily smooth,
so we take its smooth vectors for the action of $G$:
$$
\Ext_H^i(X,Y)^{\infty}\in \ObG.
$$

We would like to use
$\Ext_H^i(X,Y)^{\infty}$,  to construct  the Euler-Poincare characteristic 
as an element of the Grothendieck group of finite length representations. For this we need to know that 
$\Ext_H^i(X,Y)^{\infty}$ is a  finite length smooth $G$-module.

\begin{definition}
Fix $X\in Ob(\mathcal C_{G\times H})$ and $Y\in\ObH$. 
Assume
\begin{equation}
\Ext_H^i(X,Y)^{\infty}\quad\text{is a finite length smooth $G$-module for all i}.
\end{equation}
Define $\EP_H(X,Y)^{\infty}=\sum_i (-1)^i\Ext_H^i(X,Y)^\infty$. This is a well defined element in the Grothendieck group 
of finite length representations.
\end{definition}
In practice we will always assume $Y$ (but not $X$) has finite length as an $H$-module.  

\sec{The Theta Correspondence}
\newcommand{\gs}{^{\infty}}

Consider a dual pair of subgroups $(G,H)$ of the metaplectic group
$\Mp(2n)$.  Fix an additive character of our $p$-adic field $F$ and let $\omega$ be the
corresponding oscillator representation of $\Mp(2n)$.

Consider the theta correspondence for the dual pair $(G,H)$. See
\cite{mvw}.  Suppose $\pi$ is an irreducible  representation of $G$.  Let 
$$
\omega(\pi)=\bigcap_{f\in\Hom_G(\omega,\pi)}\Ker(f).
$$
This is a $G\times H$-submodule of $\omega$. Set
$$
\omega[\pi]=\omega/\omega(\pi).
$$
This $G\times H$-module is  the maximal $\pi$-isotypic quotient of $\omega$. 
By \cite{mvw}, 
there is a smooth $H$-module $\Theta(\pi)$, unique up to equivalence, such that 
$\omega[\pi]\simeq\pi\boxtimes\Theta(\pi)$. 
By the Howe conjecture, now proved in  generality by Gan-Takeda \cite{gan_takeda_howe_conjecture}, $\Theta(\pi)$  
has a  unique irreducible quotient, which is denoted by  $\theta(\pi)$.

\begin{proposition}
\label{p:bigtheta}
$$
\Theta(\pi)^*\simeq \Hom_G(\omega,\pi)\text{ and } 
\Theta(\pi)\simeq \Hom_G(\omega,\pi)^\vee.
$$
\end{proposition}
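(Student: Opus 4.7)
The plan is to split the proposition into the two displayed isomorphisms and derive the second from the first by taking smooth vectors and applying admissibility. Everything will flow from the factorization of $\Hom_G(\omega,\pi)$ through the maximal $\pi$-isotypic quotient and from Schur's lemma.

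First I would reduce to $\omega[\pi]$. By the very definition of $\omega(\pi) = \bigcap_{f} \Ker(f)$, every $G$-map $\omega \to \pi$ vanishes on $\omega(\pi)$, so
\[
\Hom_G(\omega,\pi) \;=\; \Hom_G(\omega[\pi],\pi) \;=\; \Hom_G(\pi \boxtimes \Theta(\pi),\, \pi),
\]
and this identification is $H$-equivariant, since the $H$-action on both sides comes from its action on $\omega$ (equivalently, on the $\Theta(\pi)$ tensor factor, with trivial action on the target $\pi$).

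Next, I would apply Schur's lemma. As a $G$-module the tensor product $\pi \boxtimes \Theta(\pi)$ is just $\pi \otimes_{\C} A$, where $A$ denotes the underlying vector space of $\Theta(\pi)$ and $G$ acts only on the $\pi$-factor. Given $\lambda \in A^*$, the map $v \otimes w \mapsto \lambda(w)\,v$ is $G$-equivariant; conversely, for any $\phi \in \Hom_G(\pi \otimes A,\pi)$ and any fixed $w \in A$, the map $v \mapsto \phi(v \otimes w)$ is a $G$-endomorphism of $\pi$, hence equals a scalar $\lambda(w)$ by Schur's lemma for irreducible admissible representations of reductive $p$-adic groups. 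Linearity in $w$ gives $\lambda \in A^*$, and the two constructions are inverse. The isomorphism is $H$-equivariant because $H$ acts on $A$ (and dually on $A^*$) on both sides in the same way. This yields the first assertion $\Hom_G(\omega,\pi) \simeq \Theta(\pi)^*$.

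For the second assertion I would take $H$-smooth vectors. Using the $H$-equivariant isomorphism just obtained, together with $(\Theta(\pi)^*)^\infty = \Theta(\pi)^\vee$ by definition of the smooth dual, one gets
\[
\Hom_G(\omega,\pi)^\infty \;\simeq\; \Theta(\pi)^\vee.
\]
Since $\Theta(\pi)$ has finite length as an $H$-module (stated just before the proposition and recorded later as Proposition \ref{p:epomega}), it is admissible, so $(\Theta(\pi)^\vee)^\vee \simeq \Theta(\pi)$ canonically. Dualizing the previous display and interpreting $\Hom_G(\omega,\pi)^\vee$ as the smooth dual of its smooth vectors (the only way $(-)^\vee$ is defined in Section \ref{s:background}) gives $\Theta(\pi) \simeq \Hom_G(\omega,\pi)^\vee$.

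The only mildly delicate point is the bookkeeping of smoothness: $\Hom_G(\omega,\pi)$ is not smooth as an $H$-module, so one must be careful to pass to $H$-smooth vectors before invoking the involutivity of the contragredient, and must cite the finite length (equivalently, admissibility) of $\Theta(\pi)$ to justify the final step. Neither step is hard, but they need to be named explicitly.
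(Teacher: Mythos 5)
Your argument follows the same route as the paper: reduce to the maximal $\pi$-isotypic quotient $\omega[\pi]\simeq\pi\boxtimes\Theta(\pi)$, apply Schur's lemma (the paper writes this as the Kunneth-style identification $\Hom_G(\pi\boxtimes\Theta(\pi),\pi)\simeq\Hom_G(\pi,\pi)\boxtimes\Theta(\pi)^*$, which you unwind by hand, but it is the same computation), take $H$-smooth vectors to get $\Hom_G(\omega,\pi)^\infty\simeq\Theta(\pi)^\vee$, and then use admissibility of $\Theta(\pi)$ together with involutivity of the smooth dual on admissible modules to recover $\Theta(\pi)$. One small bookkeeping correction: the finite-length statement for $\Theta(\pi)$ is not Proposition \ref{p:epomega} (which is about $\Ext^i_G(\omega_k,\pi)^\infty$ and comes later, as part of the $\EP$ machinery); the correct source, and the one the paper cites, is \cite[III.5]{mvw}, with the involutivity $\Theta(\pi)^{\vee\vee}\simeq\Theta(\pi)$ coming from \cite[Proposition 7]{rumelhart_bernstein}. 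With that citation fixed your proof is essentially identical to the paper's.
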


\begin{proof}
By the definition of the maximal isotypic quotient,
$$
\begin{aligned}
\Hom_G(\omega,\pi)&\simeq \Hom_G(\pi\boxtimes\Theta(\pi),\pi)\\
&\simeq \Hom_G(\pi,\pi) \boxtimes
\Hom_\C(\Theta(\pi),\C) \\
&\simeq \Theta(\pi)^*
\end{aligned}
$$
This proves the first assertion. 
Taking the smooth vectors on both sides gives 
$
\Hom_G(\omega,\pi)^{\infty}\simeq \Theta(\pi)^\vee.
$
By \cite[III.5]{mvw} $\Theta(\pi)$ is a finite length $H$-module, so admissible,
so by \cite[Proposition 7]{rumelhart_bernstein} $\Theta(\pi)^{\vee^\vee}\simeq\Theta(\pi)$.
Take the smooth dual of both sides to conclude 
$\Theta(\pi)\simeq \Hom_G(\omega,\pi)^\vee$.
\end{proof}

\sec{Type II dual pairs}

Consider the oscillator representation $\omega$  of the dual pair $(\GL(m),\GL(n))$.
Let $\omega_0$ be the geometric representation of this dual pair on 
$\S(M_{m\times n})$:
\begin{equation}
\label{e:action}
\omega_0(g,h)(f)(x)=f(g\inv xh)\quad((g,h)\in\GL(m)\times\GL(n)).
\end{equation}
We use the standard normalization of the oscillator representation: set 
\begin{equation}
\label{e:xi}
\xi(g,h)=|\det(g)|^{-n/2}|\det(h)|^{m/2}
\end{equation}
and  define the oscillator representation to be:
\begin{equation}
\omega=\omega_0\otimes\xi.
\end{equation}
This is a unitary representation of $\GL(m)\times \GL(n)$ on which $\GL(1)$ embedded
as scalar matrices $(\lambda I_m,\lambda I_n) \in \GL(m) \times \GL(n)$  acts trivially. 

Let $t=\min(m,n)$. Consider the filtration
\begin{equation}
\label{e:filtration}
0=\omega_{t+1}\subset \omega_{t}\subset\dots\subset\omega_{0}=\omega
\end{equation}
where $\omega_k$ is the Schwartz space of functions supported on matrices in $M_{m\times n}(F) $ 
of rank $\ge k$. 
Let $\Omega_k$ be the set of matrices in $M_{m\times n}(F) $ of rank $k$.
Then $\omega_k/\omega_{k+1}\simeq \S(\Omega_k)$ $(0\le k\le t)$. 

For $\pi$ an irreducible representation of $\GL(m)$, we will show that  $\EP_{\GL(m)}(\S(\Omega_k),\pi)^{\infty}$ is well defined, and at the 
same time compute it; then we will calculate
 $\EP_{\GL(m)}(\omega,\pi)^{\infty}$ as the direct sum of these.

\subsec{$\EP_{\GL(m)}(\S(\Omega_k),\pi)^{\infty}$}

First of all by \cite{mvw}:
$$
\S(\Omega_k)\simeq\xi\Ind_{P_k\times Q_k}^{\GL(m)\times \GL(n)}(\S(\GL(k))\boxtimes 1).
$$
Note that the induction is unnormalized, and we've included the character $\xi$.
Here the parabolic subgroups and Levi factors are:
$$
M_k=\GL(k)\times \GL(m-k)\subset P_k=
\begin{pmatrix}
  *&*\\0&*  
\end{pmatrix}
\subset \GL(m)
$$
and
$$
L_k=\GL(k)\times \GL(n-k)\subset Q_k=
\begin{pmatrix}
  *&0\\ *&*
\end{pmatrix}
\subset \GL(n).
$$
We rearrange terms:
$$
M_k\times L_k\simeq [\GL(k)\times \GL(k)]\times [\GL(m-k)\times \GL(n-k)],
$$
and $\S(\GL(k))\boxtimes 1$ is a representation of 
$M_k\times L_k$ with respect to this decomposition.

So
$$
\Ext^i_{\GL(m)}(\S(\Omega_k),\pi)=
\Ext^i_{\GL(m)}(\xi\Ind_{P_k\times Q_k}^{\GL(m)\times \GL(n)}(\S(\GL(k))\boxtimes 1),\pi).
$$
To compute $\Ext^i_{\GL(m)}$, we only need the action of $\GL(m)$. So write
$$
\Ind_{P_k\times Q_k}^{\GL(m)\times \GL(n)}(\S(\GL(k))\boxtimes 1)
=
\Ind_{P_k}^{\GL(m)}\big\{\Ind_{Q_k}^{\GL(n)}(\S(\GL(k))\boxtimes 1)\boxtimes 1\big\}.
$$

Now apply Frobenius reciprocity, Lemma \ref{l:frob2}(2). Write $\nu_k(j)$ for the character $|\det|^j$ of $\GL(k)$.
$$
\begin{aligned}
&\Ext^i_{\GL(m)}(\xi\Ind_{P_k}^{\GL(m)}\big\{\Ind_{Q_k}^{\GL(n)}(\S(\GL(k))\boxtimes 1) \boxtimes 1 \big \},\pi)\\
&=\Ext^i_{\GL(m)}(\nu_m(-\tfrac n2)i_{P_k}^{\GL(m)}\big\{\delta_{P_k}^{-\tfrac12}\Ind_{Q_k}^{\GL(n)}(\S(\GL(k))\boxtimes 1) \boxtimes 1 \big\},\pi)\otimes 
\nu_n(-\tfrac m2)\\
&=\Ext^i_{\GL(k)\times \GL(m-k)}(\Ind_{Q_k}^{\GL(n)}(\S(\GL(k))\boxtimes 1)\boxtimes 1,\nu_m(\tfrac n2)\delta_{P_k}^{-\tfrac12}r^{\GL(m)}_{\overline P_k}(\pi))\otimes\nu_n(-\tfrac m2)
\end{aligned}
$$
Here $\GL(k)$ is acting on $\Ind_{Q_k}^{\GL(n)}(*)$ by its action on
$\S(\GL(k))$, and $\GL(m-k)$ is acting trivially (the second $\bx$).

Write
\begin{equation}
\label{e:jacquet}
\nu_m(\tfrac n2)\delta_{P_k}^{-\tfrac12}r^{\GL(m)}_{\overline P_k}(\pi)=\sum_{j=1}^\ell\sigma_j\boxtimes \tau_j
\end{equation}
with $\sigma_j\boxtimes\tau_j$ an irreducible representation of $\GL(k)\times \GL(m-k)$.

Now we  compute
$$
\Ext^i_{\GL(k)\times \GL(m-k)}(\Ind_{Q_k}^{\GL(n)}(\S(\GL(k))\boxtimes 1)\boxtimes 1,\sigma_j\boxtimes\tau_j),
$$
where the first $1$ denotes the trivial representation of $\GL(n-k)$, and the second $1$ denotes 
the trivial representation of $\GL(m-k)$. By the Kunneth formula (Lemma \ref{l:kunneth}), this is equal to
\begin{equation}
\label{e:kunneth}
\sum_{p=0}^i
\Ext^p_{\GL(k)}(\Ind_{Q_k}^{\GL(n)}(\S(\GL(k))\boxtimes 1),\sigma_j)\otimes
\Ext^{i-p}_{\GL(m-k)}(1,\tau_j)
\end{equation}
By Lemma \ref{l:ind_proj} the induced representation 
$\Ind_{Q_k}^{\GL(n)}(\S(\GL(k))\boxtimes 1)$ 
is projective as a representation of $\GL(k)$.
Therefore all terms in  \eqref{e:kunneth} with $p>0$ are $0$, and (summing over $j$ again) we get
\begin{equation}
\label{e:hom}
\begin{aligned}
&\Ext_{\GL(m)}^i(S(\Omega_k),\pi)\simeq\\
&\sum_{j=1}^\ell\Hom_{\GL(k)}(\Ind_{Q_k}^{\GL(n)}(\S(\GL(k))\boxtimes 1),\sigma_j)\otimes
\Ext^{i}_{\GL(m-k)}(1,\tau_j)\otimes\nu_n(-\tfrac m2)
\end{aligned}
\end{equation}
Now take the $\GL(n)$-smooth vectors on both sides.
We want to apply Lemma \ref{L:dual_pair_corr}  to the first factor on the right hand side,
so first 
we replace $\Ind_{Q_k}^{\GL(n)}$ with normalized induction.
\begin{equation}
\label{e:homnorm}
\Ind_{Q_k}^{\GL(n)}(\S(\GL(k))\boxtimes 1)\simeq 
i_{Q_k}^{\GL(n)}(\S(\GL(k))\nu_k(\tfrac{n-k}2)\boxtimes \nu_{n-k}(-\tfrac k2))
\end{equation}
and the first term on the right hand side of \eqref{e:hom} is
$$
\begin{aligned}
&\Hom_{GL(k)}(i_{Q_k}^{\GL(n)}(\S(\GL(k))\nu_k(\tfrac{n-k}2)\boxtimes \nu_{n-k}(-\tfrac k2)),\sigma_j)^{\infty}\\
&\simeq
i_{Q_k}^{\GL(n)}(\Hom_{\GL(k)}(\S(\GL(k))\nu_k(\tfrac{n-k}2),\sigma_j)^{\infty}\boxtimes\nu_{n-k}(\tfrac k2))\\
&\simeq 
i_{Q_k}^{\GL(n)}(\sigma_j\nu_k(\tfrac{-n+k}2)\boxtimes \nu_{n-k}(\tfrac k2))
\end{aligned}
$$
where the final isomorphism is by Lemma \ref{l:S(G)}(2).

This proves the following intermediate result.
Recall that  $\sigma_j\bx\tau_j$, 
an irreducible representation of $\GL(k)\times \GL(m-k)$, is given in \eqref{e:jacquet}.
\begin{proposition} \label{p:finitelength} For an irreducible smooth representation $\pi$ of $\GL(m)$, we have an isomorphism of 
$\GL(n)$-modules:
\label{e:finitelength}
$$\Ext_{\GL(m)}^i(\S(\Omega_k),\pi)^{\infty}\simeq
\sum_{j=1}^{\ell} i_{Q_k}^{\GL(n)}(\sigma_j\nu_{k}(\tfrac{-n+k-m}2)\boxtimes \nu_{n-k}(\tfrac{k-m}2))\otimes\Ext^{i}_{\GL(m-k)}(1,\tau_j),
$$
with $\GL(n)$ acting trivially on the last factor.
In particular, $\Ext_{\GL(m)}^i(\S(\Omega_k),\pi)^{\infty}$  is a finite length $\GL(n)$-module.
\end{proposition}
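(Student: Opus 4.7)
The plan is to follow the chain of identifications already set up in the paragraphs preceding the statement, with the finite-length claim dropping out at the end. First I would use the identification $\S(\Omega_k)\simeq \xi\,\Ind_{P_k\times Q_k}^{\GL(m)\times\GL(n)}(\S(\GL(k))\boxtimes 1)$ from \cite{mvw}, and, since only the $\GL(m)$-action matters for computing $\Ext^i_{\GL(m)}(-,\pi)$, perform induction in stages so that the outer $\GL(n)$-induction is packaged into the coefficient module while the $\GL(m)$-induction is taken last.

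Next I would apply Frobenius reciprocity (Lemma \ref{l:frob2}(2)) to trade the $\GL(m)$-induction for the normalized Jacquet module of $\pi$ along $\overline{P_k}$, then decompose the finite-length module $\nu_m(n/2)\delta_{P_k}^{-1/2}r^{\GL(m)}_{\overline{P_k}}(\pi)$ as a finite sum $\sum_j \sigma_j\boxtimes\tau_j$ of irreducibles for $\GL(k)\times\GL(m-k)$. The K\"unneth formula (Lemma \ref{l:kunneth}) splits each summand, and the crucial vanishing step is that $\Ind_{Q_k}^{\GL(n)}(\S(\GL(k))\boxtimes 1)$ is projective as a left $\GL(k)$-module: $\S(\GL(k))$ is projective for the left $\GL(k)$-action by Lemma \ref{l:S(G)}(1), and because $Q_k$ is parabolic so that $Q_k\backslash\GL(n)$ is compact, Lemma \ref{l:ind_proj} applies. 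This annihilates every K\"unneth term with $p>0$, leaving only a $\Hom_{\GL(k)}$ times $\Ext^i_{\GL(m-k)}(1,\tau_j)$.

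To finish the formula I would pass to $\GL(n)$-smooth vectors, rewrite $\Ind_{Q_k}^{\GL(n)}$ as normalized induction $i_{Q_k}^{\GL(n)}$ at the cost of a $\delta_{Q_k}^{1/2}$ twist (i.e.\ shifts by $\nu_k((n-k)/2)$ on $\GL(k)$ and $\nu_{n-k}(-k/2)$ on $\GL(n-k)$), apply Lemma \ref{L:dual_pair_corr} to pull $\Hom_{\GL(k)}$ inside the induction, and evaluate the resulting inner $\Hom_{\GL(k)}(\S(\GL(k))\nu_k((n-k)/2),\sigma_j)^{\infty}\simeq\sigma_j\nu_k(-(n-k)/2)$ using Lemma \ref{l:S(G)}(2). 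Collecting all the twists $\xi$, $\nu_m(n/2)$, $\delta_{P_k}^{-1/2}$, $\delta_{Q_k}^{1/2}$ and $\nu_n(-m/2)$ should then produce exactly $\sigma_j\nu_k((-n+k-m)/2)\boxtimes\nu_{n-k}((k-m)/2)$.

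The finite-length conclusion then follows from three observations: the Jacquet module has only finitely many irreducible constituents, each $\Ext^i_{\GL(m-k)}(1,\tau_j)$ is finite-dimensional by Proposition \ref{p:ep=0}, and normalized parabolic induction preserves finite length. The only genuine obstacle is bookkeeping: carefully tracking the five or six modulus- and determinant-character twists (including the $\xi$ built into the normalization of $\omega$, the $\delta_{P_k}^{\pm 1/2}$ and $\delta_{Q_k}^{\pm 1/2}$ coming from normalized induction/Jacquet, and the $\nu_m(\pm n/2)$, $\nu_n(\pm m/2)$ twists on $\xi$) so that they recombine into the precise shifts appearing in the stated formula.
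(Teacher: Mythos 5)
Your proposal follows the paper's own proof step for step: the MVW identification of $\S(\Omega_k)$ as an induced representation, induction in stages, Frobenius reciprocity via Lemma \ref{l:frob2}(2), decomposition of the normalized Jacquet module into irreducibles $\sigma_j\boxtimes\tau_j$, the K\"unneth formula, the projectivity of $\Ind_{Q_k}^{\GL(n)}(\S(\GL(k))\boxtimes 1)$ as a $\GL(k)$-module via Lemma \ref{l:ind_proj} (killing $p>0$), the passage to normalized induction and $\GL(n)$-smooth vectors via Lemma \ref{L:dual_pair_corr}, and the evaluation of the inner $\Hom_{\GL(k)}(\S(\GL(k))\cdot,\sigma_j)^{\infty}$ via Lemma \ref{l:S(G)}(2). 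The only (useful) addition is that you make explicit the input to Lemma \ref{l:ind_proj}, namely that $\S(\GL(k))$ is projective for the relevant one-sided $\GL(k)$-action; the paper leaves this implicit.
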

Note that the right hand side is $0$ if $i>m-k$. 

An important special case is $m=k,i=0$. The Levi factor of $Q_m$ is $\GL(m)\times \GL(n-m)$.
Also $\ell=1$, and \eqref{e:jacquet} is simply $\nu_m(\frac n2)\pi=\sigma_1$.
Plugging this in gives
\begin{equation}
\label{e:specialcase}
\begin{aligned}
\Hom_{\GL(m)}(\S(\Omega_m),\pi)^{\infty}&\simeq
i_{Q_k}^{\GL(n)}(\pi\boxtimes 1)\\
\Ext^i_{\GL(m)}(\S(\Omega_m),\pi)^{\infty}&=0\quad (i>0).
\end{aligned}
\end{equation}

Now we can conclude  that the Euler-Poincare characteristic is well defined, 
and  \eqref{e:finitelength} yields:
$$
\EP_{\GL(m)}(\S(\Omega_k),\pi)^{\infty}\simeq
\sum_{j=1}^\ell i_{Q_k}^{\GL(n)}(\sigma_j\nu_{k}(\tfrac{-n+k-m}2)\boxtimes \nu_{n-k}(\tfrac{k-m}2))\otimes\EP_{\GL(m-k)}(1,\tau_j).
$$
By Proposition \ref{p:ep=0},  $\EP_{\GL(m-k)}(1,\tau_j)=0$ unless $k=m$, and 
if $k=m$ 
 \eqref{e:specialcase} gives
$$
\EP_{\GL(m)}(\S(\Omega_m),\pi)^{\infty}=i_{Q_m}^{\GL(n)}(\pi\bx 1)
$$
This proves:
\begin{proposition}
\label{p:EPomegak}
Let $\Omega_k$ be the set of $m\times n$ matrices (over $F$) of rank $k\le \min(m,n)$.
Consider the action of $\GL(m)\times \GL(n)$ on $\S(\Omega_k)$ given by \eqref{e:action}. 
Then for an irreducible smooth representation $\pi$ of $\GL(m)$,
$$
\EP_{\GL(m)}(\S(\Omega_k),\pi)^{\infty}=
\begin{cases}
0&k<m\\
i_{Q_m}^{\GL(n)}(\pi\boxtimes 1)&k=m
\end{cases}
$$
\end{proposition}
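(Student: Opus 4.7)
The plan is to exploit the explicit description of $\S(\Omega_k)$ as an induced representation and reduce the computation of each $\Ext^i_{\GL(m)}(\S(\Omega_k),\pi)^\infty$ to a combination of a $\Hom$-term and an Ext on a smaller $\GL$, most of which will vanish after taking the Euler--Poincar\'e characteristic. Specifically, I would first record that $\S(\Omega_k)\simeq \xi\Ind_{P_k\times Q_k}^{\GL(m)\times\GL(n)}(\S(\GL(k))\boxtimes 1)$ and then apply the second Frobenius reciprocity (Lemma \ref{l:frob2}(2)) to push the induction from $\GL(m)$ onto the $M_k=\GL(k)\times\GL(m-k)$ side, producing the Jacquet module $r^{\GL(m)}_{\overline P_k}(\pi)$ against which one needs to compute Ext.

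Next, decompose this Jacquet module (up to the character twist) into a sum of irreducibles $\sigma_j\boxtimes\tau_j$ as in \eqref{e:jacquet}, and apply the K\"unneth formula (Lemma \ref{l:kunneth}) to split the Ext into a tensor product of an Ext over $\GL(k)$ and an Ext over $\GL(m-k)$. The key simplification comes from Lemma \ref{l:ind_proj}: because $\S(\GL(k))$ is a projective $\GL(k)$-module (Lemma \ref{l:S(G)}(1)) and $Q_k\backslash\GL(n)$ is compact, the induced module $\Ind_{Q_k}^{\GL(n)}(\S(\GL(k))\boxtimes 1)$ is $\GL(k)$-projective. Therefore only the $p=0$ summand survives in the K\"unneth expansion, collapsing each $\Ext^i$ into $\Hom_{\GL(k)}(\cdots)\otimes \Ext^i_{\GL(m-k)}(1,\tau_j)$.

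I would then pass to $\GL(n)$-smooth vectors and use Lemma \ref{L:dual_pair_corr} together with Lemma \ref{l:S(G)}(2) to identify the $\Hom_{\GL(k)}$ factor as the normalized induced representation $i_{Q_k}^{\GL(n)}(\sigma_j\nu_k(\tfrac{-n+k-m}{2})\boxtimes\nu_{n-k}(\tfrac{k-m}{2}))$. This gives the finite-length formula of Proposition \ref{p:finitelength}, and hence the Euler--Poincar\'e characteristic reduces to
\[
\EP_{\GL(m)}(\S(\Omega_k),\pi)^\infty=\sum_j i_{Q_k}^{\GL(n)}(\sigma_j\nu_k(\tfrac{-n+k-m}{2})\boxtimes \nu_{n-k}(\tfrac{k-m}{2}))\otimes \EP_{\GL(m-k)}(1,\tau_j).
\]

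To conclude, invoke Proposition \ref{p:ep=0}: since $\GL(m-k)$ has non-compact center whenever $m-k>0$, the factor $\EP_{\GL(m-k)}(1,\tau_j)$ vanishes for $k<m$, killing the entire sum. When $k=m$, the group $\GL(m-k)$ is trivial, the sum \eqref{e:jacquet} degenerates to the single term $\sigma_1=\nu_m(\tfrac n2)\pi$ with $\tau_1=1$, and the remaining $\Hom$-term is exactly the induced representation $i_{Q_m}^{\GL(n)}(\pi\boxtimes 1)$ (as already recorded in \eqref{e:specialcase}), yielding the stated formula. The only delicate step is the projectivity argument ensuring that the K\"unneth formula collapses to its $p=0$ term; everything else is bookkeeping with the character twists $\nu_k,\nu_{n-k}$ and $\xi$.
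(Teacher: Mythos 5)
Your proof follows the paper's argument step by step: the Mackey-type description of $\S(\Omega_k)$ as an induced module, Frobenius reciprocity to pass to the Jacquet module $r^{\GL(m)}_{\overline P_k}(\pi)$, the K\"unneth expansion, the collapse to $p=0$ via projectivity (Lemmas \ref{l:S(G)} and \ref{l:ind_proj}), the identification of the $\Hom$-factor as an induced $\GL(n)$-module via Lemma \ref{L:dual_pair_corr}, and the final application of Proposition \ref{p:ep=0} to kill the $k<m$ terms. This is essentially identical to the paper's derivation culminating in Proposition \ref{p:finitelength} and equation \eqref{e:specialcase}, and is correct.
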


Recall that $\omega$ has a filtration with successive quotients $\S(\Omega_k)$ ($0\le k \le \min(m,n)$). 

\begin{proposition} For an irreducible smooth representation $\pi$ of $\GL(m)$,
\label{p:EPomega}
$$
\EP_{\GL(m)}(\omega,\pi)^{\infty}=  
\sum_{k=0}^{\min(m,n)}
\EP_{\GL(m)}(\S(\Omega_k),\pi)^{\infty}
$$
\end{proposition}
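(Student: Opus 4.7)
The plan is to exploit the filtration \eqref{e:filtration} of $\omega$ together with additivity of the Euler--Poincar\'e characteristic in short exact sequences, once we verify that each intermediate Ext group is a finite length smooth $\GL(n)$-module and vanishes for $i$ large.

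First I would fix the filtration $0=\omega_{t+1}\subset \omega_t\subset\cdots\subset\omega_0=\omega$, with $t=\min(m,n)$ and successive quotients $\omega_k/\omega_{k+1}\simeq \S(\Omega_k)$. For each $k$ the short exact sequence of smooth $\GL(m)\times\GL(n)$-modules
$$
0\longrightarrow \omega_{k+1}\longrightarrow \omega_k\longrightarrow \S(\Omega_k)\longrightarrow 0
$$
gives the long exact sequence of $\Ext^i_{\GL(m)}(-,\pi)$, which carries a natural (not necessarily smooth) $\GL(n)$-action. Since for any open compact $K\subset \GL(n)$ the $K$-invariants functor is exact and $(-)^\infty$ is the filtered colimit of these, the functor of smooth vectors is exact on $\GL(n)$-modules, so we obtain a long exact sequence
$$
\cdots\to \Ext^i_{\GL(m)}(\S(\Omega_k),\pi)^\infty\to \Ext^i_{\GL(m)}(\omega_k,\pi)^\infty\to \Ext^i_{\GL(m)}(\omega_{k+1},\pi)^\infty\to\cdots
$$
of smooth $\GL(n)$-modules.

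Next I would prove, by downward induction on $k$ starting from $\omega_{t+1}=0$, that $\Ext^i_{\GL(m)}(\omega_k,\pi)^\infty$ is a finite length smooth $\GL(n)$-module for every $i$, and vanishes for $i$ sufficiently large. The inductive step uses the long exact sequence above together with Proposition~\ref{p:finitelength}, which supplies the finite length property and the vanishing $\Ext^i_{\GL(m)}(\S(\Omega_k),\pi)^\infty=0$ for $i>m-k$. In particular $\EP_{\GL(m)}(\omega_k,\pi)^\infty$ is a well-defined element of the Grothendieck group for every $k$, and the hypothesis needed to define $\EP_{\GL(m)}(\omega,\pi)^\infty$ itself is verified.

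Finally I would apply additivity of Euler--Poincar\'e characteristics in short exact sequences to each short exact sequence in the filtration, obtaining
$$
\EP_{\GL(m)}(\omega_k,\pi)^\infty=\EP_{\GL(m)}(\omega_{k+1},\pi)^\infty+\EP_{\GL(m)}(\S(\Omega_k),\pi)^\infty,
$$
and iterate from $k=t$ down to $k=0$ to conclude. The only step requiring care is the exactness of $(-)^\infty$ on the long Ext sequence (otherwise we would lose control of finite length in the inductive step); the rest is formal bookkeeping. I do not anticipate a serious obstacle here, because all the finiteness input is already packaged into Proposition~\ref{p:finitelength}.
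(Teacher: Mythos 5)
Your overall skeleton matches the paper's: use the filtration $0=\omega_{t+1}\subset\cdots\subset\omega_0=\omega$, take the long exact sequence in $\Ext^*_{\GL(m)}(-,\pi)$, establish finite length by downward induction, and then apply additivity of $\EP$. However, your justification for the exactness of the smooth-vectors functor on the long exact Ext sequence is incorrect, and this is exactly the delicate step that the paper isolates in Lemma~\ref{l:longexact}.

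You write that ``the $K$-invariants functor is exact and $(-)^\infty$ is the filtered colimit of these, [so] the functor of smooth vectors is exact on $\GL(n)$-modules.'' This is false for arbitrary $\GL(n)$-modules: $K$-invariants is exact on \emph{smooth} representations, but the terms of the Ext long exact sequence are not smooth, and $(-)^\infty$ on all $G$-modules is only left exact. The paper gives the counterexample $\C[G]\to\C\to 0$ with $\C[G]^\infty=0$. The correct argument is to apply exactness of $K$-invariants to the \emph{smooth} $G\times H$-modules $A,B,C$ (obtaining an exact sequence with $\Ext^i_H(A^K,\pi)$, etc.), and then to prove the nontrivial identification $\Ext^i_H(Y,\pi)^K\simeq\Ext^i_H(Y^K,\pi)$ for $Y$ a smooth $G\times H$-module. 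That identification uses the $K$-isotypic decomposition $Y=\bigoplus_\tau Y_\tau$ and the fact that $K$ acts through a finite quotient on each $\Hom_H(Y_\tau,\cdot)$. Without this, your inductive step does not go through. Your closing remark that the obstacle is handled by the ``finiteness input'' in Proposition~\ref{p:finitelength} misdiagnoses the issue: the problem is exactness of $(-)^\infty$, not finiteness, and it requires a separate argument of the kind sketched above.
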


This would be a trivial consequence of  Proposition \ref{p:EPomegak}, except for a technical issue related to taking smooth vectors,  
so we defer the proof to the next sub-section.
Together with Proposition
\ref{p:EPomegak},  this implies the main result of this section.

\begin{theorem}
\label{t:epgln}
Let $\pi$ be an irreducible representation of $\GL(m)$. 
Then
$$
\EP_{\GL(m)}(\omega,\pi)^{\infty}=
\begin{cases}
0&n<m\\ 
i_{Q_m}^{\GL(n)}(\pi\boxtimes1)&n\ge m
\end{cases}
$$
where $Q_m$ is a parabolic subgroup of $\GL(n)$ with  Levi subgroup $\GL(m)\times \GL(n-m)$.
\end{theorem}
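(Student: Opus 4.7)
The plan is to deduce Theorem \ref{t:epgln} directly by combining the filtration of $\omega$ with the two preceding propositions. The heavy computational work has already been localized into the pieces $\S(\Omega_k)$ in Proposition \ref{p:EPomegak}, so what remains is to assemble those pieces through the filtration \eqref{e:filtration}.

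First, I invoke Proposition \ref{p:EPomega}, which says
$$\EP_{\GL(m)}(\omega,\pi)^{\infty} = \sum_{k=0}^{\min(m,n)} \EP_{\GL(m)}(\S(\Omega_k),\pi)^{\infty}.$$
In principle this is the standard additivity of Euler--Poincar\'e characteristics along the filtration $0=\omega_{t+1}\subset\omega_t\subset\cdots\subset\omega_0=\omega$, whose successive quotients are the $\S(\Omega_k)$. Granting this step, the rest is immediate: Proposition \ref{p:EPomegak} kills every summand except $k=m$, and the surviving term (which requires $m\le n$) is exactly $i_{Q_m}^{\GL(n)}(\pi\boxtimes 1)$. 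Thus for $n<m$ the index $k=m$ never occurs and the sum is zero, while for $n\ge m$ the sum collapses to the single induced representation in the statement.

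The real content, and what I expect to be the main obstacle, is justifying Proposition \ref{p:EPomega}. Additivity of $\EP$ along a short exact sequence is automatic as soon as one has a long exact sequence of Ext groups of finite length; the subtlety here is that we are working with $\Ext^i_{\GL(m)}(-,\pi)^{\infty}$, i.e.\ smooth vectors for the action of the second group $\GL(n)$. The functor of taking smooth vectors is exact, so a short exact sequence
$$0\to \omega_{k+1}\to \omega_k\to \S(\Omega_k)\to 0$$
of $\GL(m)\times\GL(n)$-modules does yield a long exact sequence in $\Ext^i_{\GL(m)}(-,\pi)^{\infty}$. Given that Proposition \ref{p:finitelength} tells us each $\Ext^i_{\GL(m)}(\S(\Omega_k),\pi)^{\infty}$ has finite length as a $\GL(n)$-module and vanishes for $i>m-k$, an induction on $k$ going down the filtration shows that each $\Ext^i_{\GL(m)}(\omega_k,\pi)^{\infty}$ is finite length and vanishes for large $i$, so the Euler--Poincar\'e characteristic is well defined and additive on the filtration.

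Assembling the pieces: start from the top of the filtration where $\omega_{t+1}=0$; at each step from $\omega_{k+1}$ to $\omega_k$, the long exact sequence yields
$$\EP_{\GL(m)}(\omega_k,\pi)^{\infty}=\EP_{\GL(m)}(\omega_{k+1},\pi)^{\infty}+\EP_{\GL(m)}(\S(\Omega_k),\pi)^{\infty},$$
and iterating gives Proposition \ref{p:EPomega}. Feeding this into Proposition \ref{p:EPomegak} delivers the theorem. The only real work beyond the statements already in hand is the careful bookkeeping that the smooth-vector functor commutes with the relevant direct limits, which is what makes the long exact sequence available in the smooth category.
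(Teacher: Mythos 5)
Your overall strategy matches the paper's: Theorem \ref{t:epgln} is obtained by combining Proposition \ref{p:EPomega} (additivity over the filtration of $\omega$) with Proposition \ref{p:EPomegak} (computation of $\EP$ on each graded piece $\S(\Omega_k)$), and you correctly note that all summands except $k=m$ vanish, while $k=m$ only occurs when $n\ge m$. That assembly is exactly the paper's argument.

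However, there is a genuine gap in the way you justify Proposition \ref{p:EPomega}. You write that ``the functor of taking smooth vectors is exact,'' and use this to conclude that the long exact sequence of $\Ext^i_{\GL(m)}(-,\pi)$ remains exact after passing to $\GL(n)$-smooth vectors. That statement is false: the functor $X \mapsto X^\infty$ from all $G$-modules to smooth $G$-modules is only left exact, and the paper even records a counterexample in the proof of Lemma \ref{l:longexact} (the right-exact map $\C[G]\to\C\to 0$ dies after taking smooth vectors, since $\C[G]^\infty=0$ for $G$ non-discrete). The exactness you need is a special feature of modules of the form $\Ext^i_H(Y,X)$ with $Y$ smooth for $G\times H$ and $X$ smooth for $H$, and establishing it is the real content of Lemma \ref{l:longexact}. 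The argument there is not just ``commuting smooth vectors with direct limits'': one must first show $\Ext^i_H(Y,X)^K\simeq\Ext^i_H(Y^K,X)$ for each open compact $K\subset G$, by decomposing $Y$ into $K$-isotypic components and observing $\Ext^i_H(Y,X)$ is the direct product of the isotypic contributions, and only then take the union over $K$. Without this step your deduction of the long exact sequence of $\Ext^i_{\GL(m)}(-,\pi)^\infty$, and hence of Proposition \ref{p:EPomega}, does not go through.
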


\subsec{$\EP_{\GL(m)}(\omega,\pi)^{\infty}$}

We prove Proposition \ref{p:EPomega}.

\begin{lemma}
\label{l:longexact}
Suppose $0\rightarrow A\rightarrow B\rightarrow C\rightarrow 0$ is a short exact sequence of smooth $G\times H$-modules, 
and $X$ is a smooth $H$-module. 
Then the sequence of smooth $G$-modules
$$
\begin{aligned}
&0\rightarrow\Hom_H(C,X)^{\infty}\rightarrow
\Hom_H(B,X)^{\infty}\rightarrow
\Hom_H(A,X)^{\infty}\rightarrow\dots\\
&\dots\rightarrow\Ext^{i}_H(C,X)^{\infty}\rightarrow
\Ext^{i}_H(B,X)^{\infty}\rightarrow
\Ext^{i}_H(A,X)^{\infty}\rightarrow\dots\\
\end{aligned}
$$
is exact.
\end{lemma}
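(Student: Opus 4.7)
The plan is to combine the classical long exact sequence of $\Ext^i_H$ in $\Cat_H$ with the fact that, on smooth $G$-modules, $(-)^\infty$ is a filtered colimit of $K$-invariants as $K$ ranges over open compact subgroups of $G$. First I fix such a $K$. Because $G$ and $H$ commute inside $G\times H$, the functor $(-)^K$ sends smooth $G\times H$-modules to smooth $H$-modules, and the Hecke idempotent $e_K=\frac{1}{vol(K)}1_K$ acts as a projector onto $K$-invariants on every smooth $G$-module, so $(-)^K$ is exact. Applied to $0\to A\to B\to C\to 0$ it produces an exact sequence $0\to A^K\to B^K\to C^K\to 0$ of smooth $H$-modules, and the derived-functor formalism of Lemma \ref{l:basic} yields a long exact sequence
\[
\cdots\to\Ext^i_H(C^K,X)\to\Ext^i_H(B^K,X)\to\Ext^i_H(A^K,X)\to\Ext^{i+1}_H(C^K,X)\to\cdots.
\]

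The main step, which I expect to be the chief obstacle, is the natural identification $\Ext^i_H(Z^K,X)\cong\Ext^i_H(Z,X)^K$ for $Z\in\{A,B,C\}$. I would establish it by choosing a resolution $P_\bullet\to Z$ in smooth $G\times H$-modules with each $P_j$ also $H$-projective; such resolutions can be built via compact induction, in the spirit of Lemmas \ref{l:frob1}, \ref{l:ind_proj} and \ref{l:S(G)}. The idempotent $e_K$ acts on $\Hom_H(P_\bullet,X)$ through the $G$-action, and a direct check shows that $e_K\cdot f$ is the map $p\mapsto f(e_K p)$, so $\Hom_H(P,X)^K=\Hom_H(P^K,X)$. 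Since $P^K$ is an $H$-summand of $P$ via the $K$-isotypic decomposition, $P_\bullet^K\to Z^K$ is again an $H$-projective resolution, and taking cohomology gives
\[
\Ext^i_H(Z^K,X)\cong H^i(\Hom_H(P_\bullet^K,X))=H^i(\Hom_H(P_\bullet,X))^K\cong\Ext^i_H(Z,X)^K.
\]

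Finally, since $\Ext^i_H(Z,X)^\infty=\varinjlim_K\Ext^i_H(Z,X)^K$ with $K$ running over open compact subgroups of $G$, and filtered colimits preserve exactness, passing to the colimit in $K$ of the long exact sequence from the first paragraph delivers the desired long exact sequence of smooth $G$-modules.
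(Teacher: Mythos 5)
Your argument is correct in outline and proceeds by a genuinely different mechanism than the paper's, though it leaves a nontrivial construction unjustified. Both you and the paper reduce the lemma to the key identification $\Ext^i_H(Z,X)^K \cong \Ext^i_H(Z^K,X)$, followed by passage to the filtered colimit over $K$. But the paper establishes this identification by taking an \emph{injective} resolution $X\to X_\bullet$ in $\Cat_H$ (which exists by Lemma \ref{l:basic}), decomposing $Z=\bigoplus_\tau Z_\tau$ into $K$-isotypic pieces so that $\Hom_H(Z,X_\bullet)=\prod_\tau\Hom_H(Z_\tau,X_\bullet)$, and observing that $K$ acts on each factor through a finite quotient, so that passing to $K$-invariants is exact and kills all $\tau\neq\mathbf 1$. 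You instead want a \emph{projective} resolution $P_\bullet\to Z$ in $\Cat_{G\times H}$ whose terms are $H$-projective, and then let the idempotent $e_K$ act directly on the complex $\Hom_H(P_\bullet,X)$; the rest of your computation (that $e_K$ commutes with the differential, that $\Hom_H(P_j,X)^K\cong\Hom_H(P_j^K,X)$, and that $P_\bullet^K\to Z^K$ remains an $H$-projective resolution) is correct. The gap is the existence of $P_\bullet$: Lemmas \ref{l:frob1}, \ref{l:ind_proj}, and \ref{l:S(G)} do not directly produce a resolution of an arbitrary smooth $G\times H$-module by smooth $G\times H$-modules that are also $H$-projective. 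This can be done --- for instance the convolution surjection $\S(G\times H)\otimes_{\C}Z\to Z$ with $G\times H$ acting on $\S(G\times H)$ by translation has source isomorphic as an $H$-module to a direct sum of copies of $\S(H)$, hence $H$-projective by Lemma \ref{l:S(G)}(1), and one iterates --- but it is additional work that the paper's use of an injective resolution of $X$ in $\Cat_H$ sidesteps entirely, since enough injectives is already guaranteed by Lemma \ref{l:basic}. In short: same skeleton, different key dual pairing (injective resolution of the target versus a doubly good projective resolution of the source), with your route requiring one extra existence statement that should be spelled out.
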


\begin{proof}
The given sequence is exact before taking the smooth vectors.
We claim this remains true after taking the smooth vectors. 

This is not immediate since the functor $\pi \rightarrow \pi^\infty$
from the category of all $G$-modules to the category of smooth
$G$-modules is only left exact. Indeed, for $G$ a non-discrete
topological group, consider the right exact sequence of $G$-modules
\[ 
\mathbb C[G] \rightarrow \mathbb C \rightarrow 0 
\] 
where $\mathbb C[G]$ is the group algebra of $G$, i.e. the space of finite sums $\sum_{g\in G} c_g g$, where $c_g\in\mathbb C$, and the first 
arrow is $\sum_{g\in G} c_g g\mapsto \sum_{g\in G} c_g$. Since $G$ is non-discrete,  $\mathbb C[G]^{\infty}=0$, so the sequence is not exact after taking smooth vectors. 

The essence of the proof below is that for $G$-modules of the form $\Hom_H(Y,X)$  where  $X$ is a smooth module 
of $H$ and $Y$ of $G \times H$, taking smooth vectors for $G$ is an exact functor.

Let $K$ be an open compact subgroup of $G$.
Since taking $K$-fixed vectors is an exact functor on smooth representations of $G$ \cite{casselman_padic}, 
the sequence
\begin{equation}
\begin{aligned}
&0\rightarrow\Hom_H(C^K,X)\rightarrow
\Hom_H(B^K,X)\rightarrow
\Hom_H(A^K,X)\rightarrow\dots\\
&\dots\rightarrow\Ext^{i}_H(C^K,X)\rightarrow
\Ext^{i}_H(B^K,X)\rightarrow
\Ext^{i}_H(A^K,X)\rightarrow\dots\\
\end{aligned}
\end{equation}
is exact.

Let $Y$ be a smooth $G\times H$-module. Taking an injective resolution of $X$:
$$0 \rightarrow   X \rightarrow X_0  \rightarrow  X_1 \rightarrow \cdots,$$
we can calculate $\Ext^i_{H}(Y,X)$ by using the chain complex $\Hom_H(Y,X_i)$.  
Write $Y=\oplus_{\tau} Y_{\tau}$ as a direct sum over irreducible  representations of $K$, 
 where $K$ acts on $Y_{\tau}$ as a multiple of $\tau$.
 Then $\Hom_H(Y,X_i)$  is a direct product of spaces 
$\Hom_H(Y_{\tau},X_i)$ over all $\tau$,  for every $i$.  Let $\Ext^i_{H}(Y_{\tau},X)$ be the cohomology of the complex $\Hom_H(Y_{\tau},X_i)$. Then
$\Ext_H^i(Y,X)$  is a direct product of spaces $\Ext_H^i(Y_{\tau},X)$ over all $\tau$, for every $i$. 

For every $\tau$,  $K$ acts on the complex $\Hom_H(Y_{\tau},X_i)$ through a finite quotient, hence 
$\Ext^i_{H}(Y_{\tau},X)^K$ is calculated using the complex $\Hom_H(Y_{\tau},X_i)^K=\Hom_H(Y_{\tau}^K, X_i)$. Note that $Y_{\tau}^K=0$ unless 
$\tau$ is the trivial representation of $K$ and then $Y_{\tau}^K=Y^K$.  
Since $\Ext_H^i(Y,X)$  is a direct product of spaces $\Ext_H^i(Y_{\tau},X)$, it follows that 
$\Ext_H^i(Y,X)^K\simeq \Ext_H^i(Y^K,X)$ for all $i$. This proves that the following is an exact sequence for all $K$:
\begin{equation}
\begin{aligned}
&0\rightarrow\Hom_H(C,X)^K\rightarrow
\Hom_H(B,X)^K\rightarrow
\Hom_H(A,X)^K\rightarrow\dots\\
&\dots\rightarrow\Ext^{i}_H(C,X)^K\rightarrow
\Ext^{i}_H(B,X)^K\rightarrow
\Ext^{i}_H(A,X)^K\rightarrow\dots\\
\end{aligned}
\end{equation}
The lemma now follows since the subspace of $G$-smooth vectors is a union of subspaces of $K$-fixed vectors for all $K$. 
\end{proof}

\begin{proposition}
\label{p:epomega}
Suppose $\pi$ is an irreducible representation of $G$. Then for $0\le k\le\min(m,n)$, 
$\Ext^i_G(\omega_k,\pi)^\infty$ is a $G'$-module of finite length, 
and $\EP(\omega_k,\pi)^\infty$ is a well-defined element of the Grothendieck group of finite length representations of $G'$.
\end{proposition}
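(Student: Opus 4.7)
The plan is to proceed by descending induction on $k$, starting from $\omega_{t+1}=0$ where $t=\min(m,n)$, and using the short exact sequences
$$
0\to \omega_{k+1}\to \omega_k\to \S(\Omega_k)\to 0
$$
of smooth $G\times G'$-modules. At each step Proposition \ref{p:finitelength} gives control of the quotient $\S(\Omega_k)$, namely that $\Ext^i_G(\S(\Omega_k),\pi)^\infty$ is a $G'$-module of finite length for every $i$, and vanishes for $i>m-k$ (in particular for $i>m$).

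To pass from $\omega_{k+1}$ to $\omega_k$, I would apply Lemma \ref{l:longexact} with the short exact sequence above (and $X=\pi$, $H=G$, so that the connecting $G$-action here is the $G'$-action). This yields an exact sequence of smooth $G'$-modules
$$
\cdots\to\Ext^{i-1}_G(\omega_{k+1},\pi)^\infty\to\Ext^i_G(\S(\Omega_k),\pi)^\infty\to\Ext^i_G(\omega_k,\pi)^\infty\to\Ext^i_G(\omega_{k+1},\pi)^\infty\to\cdots
$$
which is the crucial input that allows the finite length property to propagate. The base case $k=t+1$ is trivial since $\omega_{t+1}=0$; for the inductive step, if $\Ext^i_G(\omega_{k+1},\pi)^\infty$ is of finite length and vanishes for $i$ large, then each $\Ext^i_G(\omega_k,\pi)^\infty$ sits in an exact sequence between two finite length $G'$-modules, hence is itself of finite length, and vanishes for $i>m$.

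Once the finite length is established, the Euler--Poincar\'e characteristic $\EP_G(\omega_k,\pi)^\infty$ is defined as an alternating sum of well-defined finite length modules, and the standard alternating sum identity for a long exact sequence gives
$$
\EP_G(\omega_k,\pi)^\infty=\EP_G(\omega_{k+1},\pi)^\infty+\EP_G(\S(\Omega_k),\pi)^\infty
$$
in the Grothendieck group. Iterating this telescoping identity down from $k=t+1$ to $k=0$ yields Proposition \ref{p:EPomega}.

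The main subtlety, and the reason Lemma \ref{l:longexact} was needed in the first place, is that the functor of smooth vectors is only left exact on general $G'$-modules, so one cannot just take the long exact sequence of $\Ext^i_G(-,\pi)$ and then apply $(-)^\infty$ naively. The content of Lemma \ref{l:longexact} is exactly that for the specific $G'$-modules of the form $\Ext^i_G(Y,\pi)$ arising here, passage to smooth vectors preserves exactness; this is the only nontrivial point, and everything else is bookkeeping with the filtration.
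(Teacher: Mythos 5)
Your proof is correct and follows essentially the same route as the paper's: downward induction on $k$ starting from $\omega_{t+1}=0$, using the short exact sequences of the filtration together with Lemma \ref{l:longexact} to get long exact sequences of smooth $G'$-modules, and concluding finite length (and vanishing for $i>m$) of $\Ext^i_G(\omega_k,\pi)^\infty$ from Proposition \ref{p:finitelength} and the inductive hypothesis. You also correctly identify the key subtlety — that $(-)^\infty$ is only left exact in general, so the content of Lemma \ref{l:longexact} is indispensable — which the paper's proof relies on but does not re-emphasize at this point.
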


\begin{proof}
Return to the filtration \eqref{e:filtration}, and for $0\le k\le t=\min(m,n)$ consider the exact sequence
\begin{equation}
\label{e:exact}
0\rightarrow \omega_{k+1}\rightarrow \omega_k\rightarrow \S(\Omega_k)\rightarrow 0
\end{equation}
Apply Lemma \ref{l:longexact} to this, with $X=\pi$. 
Part of the resulting long exact sequence is:
$$
\begin{aligned}
\Ext^i_{\GL(m)}(\S(\Omega_k),\pi)^\infty
&\rightarrow
\Ext^i_{\GL(m)}(\omega_k,\pi)^\infty\rightarrow\\
&\rightarrow\Ext^i_{\GL(m)}(\omega_{k+1},\pi)^\infty
\rightarrow
\Ext^{i+1}_{\GL(m)}(\S(\Omega_k),\pi)^\infty
\end{aligned}
$$
By Proposition \ref{e:finitelength} the first and last terms have finite length. 
Since $\omega_{t+1}=0$  we conclude $\Ext^i_{GL(m)}(\omega_t,\pi)^\infty$ has finite length. 
By downward induction on $k$ the same holds for 
$\Ext^i_{GL(m)}(\omega_k,\pi)^\infty$ for all $k$. 
The assertion about $EP$ follows from this and  Lemma \ref{l:basic}(2).
\end{proof}

\begin{proof}[Proof of Proposition \ref{p:EPomega}]
By the preceding Lemma we conclude
$$
\EP(\omega_k,\pi)^{\infty}=
\EP(\S(\Omega_k),\pi)^{\infty}+
\EP(\omega_{k+1},\pi)^{\infty}.
$$
The Proposition follows by repeated applications of this, starting with $k=0$ and going through $k=\min(m,n)$. 
\end{proof}

\sec{Example: $(\GL(1),\GL(n))$}

Let $(G,H)=(\GL(1),\GL(n))$, acting on $\omega=\S(V)$ where $V=F^n$. 
The filtration is
$0\subset \S(\Omega_1)\subset \S(V)=\omega$, where $\Omega_1=V-\{0\}$.

\begin{lemma}
\label{l:omegaprojective}
The representation $\omega = \S(V)$ of $\GL(1)$ is a projective $\GL(1)$-module. 
\end{lemma}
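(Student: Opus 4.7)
The plan is to decompose $\omega=\S(V)$ according to the semisimple restriction to the maximal compact subgroup $\mathcal{O}^\times\subset F^\times=\GL(1)$, writing $\omega=\bigoplus_\psi\omega_\psi$ as $\psi$ runs over smooth characters of $\mathcal{O}^\times$.  Because $\Hom$ in $\Cat_{\GL(1)}$ respects this decomposition, it suffices to show each $\omega_\psi$ is a projective smooth $\GL(1)$-module.

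First I would handle $\psi\neq 1$.  Since the character $\chi=|\cdot|^{-n/2}$ governing the quotient $\S(V)/\S(\Omega_1)\simeq\C_\chi$ is trivial on $\mathcal{O}^\times$, one has $\omega_\psi=\S(\Omega_1)_\psi$, a direct summand of $\S(\Omega_1)=\S(V\setminus\{0\})$.  To prove $\S(\Omega_1)$ itself is projective, realize $V\setminus\{0\}$ as a principal $\GL(1)$-bundle over the compact space $\mathbb{P}^{n-1}(F)=P\backslash \GL(n)$, with $P\subset \GL(n)$ the stabilizer of the line $F\cdot e_1$; this identifies $\S(\Omega_1)\cong \Ind_P^{\GL(n)}(\S(F^\times))$ (up to a character twist from $\xi$), and since $\GL(1)$ acts on the fiber $\S(F^\times)$ by a twist of the left regular representation---projective by Lemma \ref{l:S(G)}(1)---Lemma \ref{l:ind_proj} yields $\GL(1)$-projectivity of $\S(\Omega_1)$.

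The case $\psi=1$ is the main point: I must show $\omega^{\mathcal{O}^\times}=\S(V)^{\mathcal{O}^\times}$ is projective.  The full subcategory of $\Cat_{\GL(1)}$ on which $\mathcal{O}^\times$ acts trivially is equivalent to the category of $\C[T,T^{-1}]$-modules, with $T$ corresponding to a fixed uniformizer $\varpi$.  Since $\C[T,T^{-1}]\simeq \ind_{\mathcal{O}^\times}^{\GL(1)}(\C)$ is a projective object of $\Cat_{\GL(1)}$ by Lemma \ref{l:frob1}, every free $\C[T,T^{-1}]$-module is projective in $\Cat_{\GL(1)}$, so it suffices to exhibit $\S(V)^{\mathcal{O}^\times}$ as a free $\C[T,T^{-1}]$-module.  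I propose the generating set consisting of $1_{\mathcal{O}^n}$ together with $\{1_{(\mathcal{O}^n\setminus\varpi\mathcal{O}^n)\cap\pi^{-1}(U)}\}$, where $\pi\colon\mathcal{O}^n\setminus\varpi\mathcal{O}^n\twoheadrightarrow \mathbb{P}^{n-1}(F)$ is the natural projection and $U$ runs over a basis of a chosen complement to the constants in $\S(\mathbb{P}^{n-1}(F))$.  The $\varpi$-action carries each such sectoral annular generator to its analogue at the next valuation (up to $q^{n/2}$ from the $\xi$-twist), so those submodules are manifestly free and together form a direct sum with disjoint sector labels.  The hard part will be showing that $1_{\mathcal{O}^n}$ also has zero annihilator in $\C[T,T^{-1}]$: using $T^k\cdot 1_{\mathcal{O}^n}=q^{kn/2}\,1_{\varpi^k\mathcal{O}^n}$ and the telescoping identity $1_{\varpi^k\mathcal{O}^n}=1_{\mathcal{O}^n}-\sum_{j=0}^{k-1}1_{\varpi^j(\mathcal{O}^n\setminus\varpi\mathcal{O}^n)}$ (with the obvious analogue for $k<0$), any putative relation $\sum_k a_k T^k\cdot 1_{\mathcal{O}^n}=0$ expands in the vector-space basis $\{1_{\mathcal{O}^n}\}\cup\{1_{\varpi^j(\mathcal{O}^n\setminus\varpi\mathcal{O}^n)}\}_{j\in\Z}$, and matching coefficients of the radial annular indicators telescopes inductively to force $a_k=0$ for all $k$.
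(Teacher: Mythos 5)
Your argument is correct in spirit but takes a genuinely different route from the paper, and at some cost in economy. You decompose $\omega$ by isotypic characters $\psi$ of the maximal compact $\mathcal{O}^\times$ of $\GL(1)$ itself. The paper instead decomposes by isotypic components for $K'$, the maximal compact of the \emph{partner} group $\GL(n)$: since $\omega/\S(\Omega_1)\simeq\xi$ is trivial on $K'$, the non-trivial $K'$-type pieces live in $\S(\Omega_1)$ (projective by Lemma \ref{l:ind_proj}), exactly as in your $\psi\ne 1$ case; but the trivial $K'$-type $\omega^{K'}$ is tiny. Smooth $K'$-invariant functions on $V$ are spanned by characteristic functions of the lattices $\varpi^k\mathcal{O}^n$ in the standard chain, which $\varpi^\Z$ permutes simply transitively with stabilizer $\mathcal{O}^\times$, so $\omega^{K'}\cong\ind_K^{\GL(1)}(\C)$ on inspection --- a rank-one free $\C[T,T^{-1}]$-module. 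Your $\omega^{\mathcal{O}^\times}$ is an infinite-rank free module, and establishing freeness forces the extra ``angular'' analysis via $\mathbb{P}^{n-1}(F)$. The paper's choice exploits the commuting $\GL(n)$-action to collapse the hard case; yours is more intrinsic to $\GL(1)$ but longer.

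Two small repairs to your $\psi=1$ argument. First, the notation $1_{(\mathcal{O}^n\setminus\varpi\mathcal{O}^n)\cap\pi^{-1}(U)}$ is not well-formed when $U$ is a function rather than a subset: you mean the pullback $\pi^*U$ extended by zero, not the indicator of a preimage. Second, you verify that the sectoral submodule and the submodule $\C[T,T^{-1}]\cdot 1_{\mathcal{O}^n}$ are each free, but not that they are jointly independent and span. Both are easy but should be said: on each annulus $\varpi^k\mathcal{O}^n\setminus\varpi^{k+1}\mathcal{O}^n$ the elements of $\C[T,T^{-1}]\cdot 1_{\mathcal{O}^n}$ restrict to multiples of the constant in $\S(\mathbb{P}^{n-1}(F))$ while the sectoral generators restrict into your chosen complement of the constants, giving the intersection zero; and spanning follows since any $f\in\omega^{\mathcal{O}^\times}$ agrees with a scalar multiple of $T^N\cdot 1_{\mathcal{O}^n}$ near the origin, reducing to $\S(\Omega_1)^{\mathcal{O}^\times}$, where $T^k\cdot 1_{\mathcal{O}^n\setminus\varpi\mathcal{O}^n}$ telescopes as a combination of $T^k\cdot 1_{\mathcal{O}^n}$ and $T^{k+1}\cdot 1_{\mathcal{O}^n}$.
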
 
\begin{proof} Let $K$ and $K'$ be maximal compact subgroups of $\GL(1)$ and $\GL(n)$, respectively. Let
\[ 
\omega=\oplus_{\tau} \omega_{\tau} 
\] 
be the decomposition of $\omega$ into $K'$-isotypic  components. It suffices to prove that each summand is projective. If $\tau$ is not the 
trivial representation, then $ \S(\Omega_1)_{\tau}=\omega_{\tau}$. Since $ \S(\Omega_1)$ is projective, it follows that $\omega_{\tau}$ is 
projective. It remains to deal with the trivial $K'$-type. The group $K'$ stabilizes a lattice chain in $V$, and any smooth $K'$-invariant 
function is a linear combination of characteristic functions of the lattices in the chain. Since $\GL(1)$ acts transitively on the lattices in the chain 
with the stabilizer $K$, it follows that 
\[ 
\omega^{K'}\cong \ind_K^{\GL(1)} (1) 
\] 
and so is projective. \end{proof} 

Recall the definition of  the character $\xi=|\det|^{-\frac n2}\bx|\det|^{\frac12}$  of $G\times H$ \eqref{e:xi}.
We have the exact sequence
\begin{equation}
\label{e:S(V)}
0\rightarrow \S(\Omega_1)\rightarrow \omega \rightarrow\xi\rightarrow 0 
\end{equation}
which gives the exact sequence of 
$\GL(n)$-modules:
$$
\begin{aligned}
0
\rightarrow \Hom_{\GL(1)}(\xi,\chi)
\rightarrow  \Hom_{\GL(1)}(\omega,\chi) \\
\rightarrow  \Hom_{\GL(1)}(\S(\Omega_1),\chi)
\rightarrow \Ext^1_{\GL(1)}(\xi,\chi)
\rightarrow 0.  
\end{aligned}
$$
By Lemma \ref{l:longexact}, the sequence remains exact after taking the smooth vectors; 
take the smooth dual to give the exact sequence:
$$
\begin{aligned}
0
\rightarrow\Ext^1_{\GL(1)}(\xi,\chi)^\vee
\rightarrow  \Hom_{\GL(1)}(\S(\Omega_1),\chi)^\vee\\
\rightarrow\Hom_{\GL(1)}(\omega,\chi)^\vee
\rightarrow\Hom_{\GL(1)}(\xi,\chi)^\vee
\rightarrow 0.
\end{aligned}
$$
By \eqref{e:specialcase} and Proposition \ref{p:bigtheta},
$$
\begin{aligned}
\Hom_{\GL(1)}(\S(\Omega_1),\chi)^\vee&\simeq i_{Q_1}^{\GL(n)}(\chi^\vee\boxtimes 1)\\
\Hom_{\GL(1)}(\omega,\chi)^\vee&\simeq \Theta(\chi).
\end{aligned}
$$
By  Proposition \ref{p:ep=0},  $\EP_{\GL(1)}(\xi,\chi)=0$,  i.e.
$$
\Ext_{\GL(1)}^1(\xi,\chi)=\Hom_{\GL(1)}(\xi,\chi)=
\begin{cases}
  |\det|^{-\frac12}&\chi=|\det|^{-\frac n2}\\0&\text{else.}
\end{cases}
$$
If $\chi\ne|\det|^{-\frac n2}$ we conclude
$$
\Theta(\chi)\simeq i_{Q_1}^{\GL(n)}(\chi^\vee\boxtimes\C).
$$
On the other hand taking $\chi=|\det|^{-\frac n2}$ gives the exact sequence
$$
0
\rightarrow|\det|^{\frac12}
\rightarrow i_{Q_1}^{\GL(n)}(|\det|^{\frac n2}\boxtimes\C)
\rightarrow \Theta(|\det|^{-\frac n2})
\rightarrow|\det|^{\frac12}
\rightarrow0,
$$
which implies
$$
\theta(|\det|^{-\frac n2})=|\det|^{\frac12}.
$$
Note that  $\Theta(|\det|^{-\frac n2})$ and 
$i_{Q_1}^{\GL(n)}(|\det|^{\frac n2}\boxtimes\C)$ have the same image in the Grothendieck group.
However $|\det|^{\frac12}$ is a quotient of the former, and a submodule of the latter.

In particular if $n=1$ this proves
$$
\Theta(\chi)=\theta(\chi)=\chi^\vee\quad(\text{for all }\chi).
$$

If $n=2$ taking $\chi=|\det|^{-1}$ shows that 
$\Theta(|\det|^{-1})$ has 
$\text{Steinberg}|\det|^{\frac12}$ as a submodule, and 
$|\det|^{\frac12}\simeq \theta(|\det|^{-1})$ as a quotient (the opposite composition series
of the induced representation).

\begin{question}
\label{q:question}
Consider a dual pair $(G,G')$ in the metaplectic group  $\Mp(2n)$, such that
the split rank of $G$ is not greater than the split rank of $G'$.
Let $\omega$ be the oscillator representation  representation of $\Mp(2n)$. 
Is it true that $\omega$ is a projective module in the category of smooth representations of $G$?
In fact all we need is an affirmative answer to 
the (presumably weaker question): for every irreducible representation $\pi$ of $G$, is
\begin{equation}
\label{e:extvanish}
\Ext^i_G(\omega, \pi) = 0~~~~~~~~~\text{for all } i>0?
\end{equation}
A similar question is posed in 
\cite[Conjecture 2]{prasadext}. For $F$ an Archimedean field, similar questions may be posed 
in the category of $({\mathfrak g},K)$-modules.
\end{question}

Assuming \eqref{e:extvanish} then  $\Theta(\pi)^\vee=\EP_G(\omega,\pi)$; 
the right hand side is a more elementary object, and easier to compute. 
A similar discussion  applies when reducing the computation of  $\Hom$ to $\EP$ in 
branching problems, as discussed in \cite[Section 2]{prasadext}. 

\begin{example}
\label{ex:gln}
Consider the dual pair $(\GL(m),\GL(n))$ with $m\le n$, and suppose
$\pi$ is an irreducible representation of $\GL(m)$.  Assume 
$\Ext^i_{\GL(m)}(\omega,\pi)=0$ for $i>0$.  Then  by Theorem \ref{t:epgln}
\begin{equation}
\label{e:Thetaind}
\Theta(\pi)=i_Q^{\GL(n)}(\pi^\vee\boxtimes 1)
\end{equation}
(equality in the Grothendieck group of finite length $GL(n)$-modules). 
The computation of $\Theta(\pi)$ (as opposed to its irreducible quotient $\theta(\pi)$)
seems not to be available in the literature, even in the 
case of type II dual pairs.

If $\pi$ is unitary then $\theta(\pi)=\Theta(\pi)=i_Q^{\GL(n)}(\pi^\vee\boxtimes 1)$ 
since the induced representation is irreducible.
Also if $m=1$ then \eqref{e:extvanish} holds by Lemma \ref{l:omegaprojective}.
In all of these cases \eqref{e:Thetaind} holds.
\end{example}

\sec{Type I Dual Pairs}
\label{s:typeI}

Let $\O(N)$ be the isometry group of a nondegenerate  quadratic space of dimension $N$. 
Let $\omega$ be the oscillator representation for the dual pair
$(\Sp(2m),\O(N))$. We'll ignore the issue of covers, which play a non
essential role, while making the notation more cumbersome. 
If $N$ is even, the covers can be avoided altogether.

Suppose $P(t)=M(t)N(t)$ is the stabilizer of a $t$-dimensional isotropic
subspace of the symplectic space, and $\pi$ is an irreducible representation of $M(t)$. 
We want to compute
\begin{subequations}
\renewcommand{\theequation}{\theparentequation)(\alph{equation}}  
\label{e:typeI}
\begin{equation}
\Ext^i_{\Sp(2m)}(\omega,i_{P(t)}^{\Sp(2m)}(\pi))^{\infty}
\end{equation}
as well as $\EP$. 

By Frobenius reciprocity (Lemma \ref{l:frob2}(1)) 
\begin{equation}
\Ext^i_{\Sp(2m)}(\omega,i_{P(t)}^{\Sp(2m)}(\pi))
\simeq
\Ext^i_{M(t)}(r^{\Sp(2m)}_{P(t)}(\omega),\pi).
\end{equation}
Write
\begin{equation}
\label{e:M(t)}
M(t)=\GL(t)\times \Sp(2m-2t).
\end{equation}
For $0\le j\le \min(t,n)$,  let
\begin{equation}
P(t,j)=M(t,j)N(t,j)\subset M(t)
\end{equation}
be a parabolic subgroup of $M(t)$ 
where
\begin{equation}
M(t,j)=\GL(t-j)\times \GL(j)\times \Sp(2m-2t)
\end{equation}
and $N(t,j)\subset \GL(t)\subset M(t)$.
Let
\begin{equation}
Q(j)=L(j)U(j)\subset\O(N)
\end{equation}
be a parabolic subgroup of $\O(N)$ with Levi factor
$$
L(j)=
\GL(j)\times \O(N-2j).
$$
Let $n$ be the Witt index of $V$, so $V$ is the direct sum of an
anisotropic kernel $V_0$ and a hyperbolic space of dimension $2n$. We
will also consider the family of orthogonal spaces with the same
anistropic kernel $V_0$.  Such a space is determined by its Witt
index, so we write $\omega_{m',n'}$ for the oscillator representation
for the dual pair $(\Sp(2m'),\O(N'))$, where the orthogonal has dimension $N'=\dim(V_0)+2n$, anisotropic kernel $V_0$ and Witt index $n$.
With this convention $\omega=\omega_{m,n}$.

By \cite[3.IV.5]{mvw}, the representation $r^{\Sp(2m)}_{P(t)}(\omega_{m,n})$  of $M(t)\times \O(N)=\GL(t)\times\Sp(2m-2t)\times\O(N)$ 
has a filtration
\begin{equation}
\label{e:filtrationtypeI}
0=F_{t+1}\subset F_t\subset F_{t-1}\subset\dots\subset F_0=r^{\Sp(2m)}_{P(t)}(\omega_{m,n})
\end{equation}
 with subquotients
\begin{equation}
F_{j}/F_{j+1}\simeq i_{P(t,j)\times Q(j)}^{M(t)\times \O(N)}(\xi(t,j)\otimes\S(\GL(j))\bx\omega_{m-t,n-j})
\end{equation}
for some character $\xi(t,j)$ of $\GL(t-j)$.
The actions are:
$$
\begin{aligned}
&\GL(j)\subset M(t,j)\text{ acts on $\S(\GL(j))$}\text{ on the left},\\
&\GL(t-j)\subset M(t,j)\text{ acts by some character }\xi(t,j),\\
&\GL(j)\subset L(j)\text{ acts on $\S(\GL(j))$}\text{ on the right},\\
&(\Sp(2m-2t),\O(N-2j))\text{ acts by the oscillator representation $\omega_{m-t,n-j}$}.
\end{aligned}
$$
So we need to compute 
\begin{equation}
\Ext^i_{M(t)}(
i_{P(t,j)\times Q(j)}^{M(t)\times \O(N)}(\xi(t,j)\otimes\S(\GL(j))\bx\omega_{m-t,n-j}),\pi).
\end{equation}

By the second version of Frobenius reciprocity (Lemma \ref{l:frob2}(2)), applied to the induction 
from $P(t,j)$ to $M(t)$, this is isomorphic to
\begin{equation}
i_{Q(j)}^{\O(N)}(\Ext^i_{M(t,j)}
(\xi(t,j)\otimes\S(\GL(j))\bx\omega_{m-t,n-j}),r^{M(t)}_{\overline P(t,j)}(\pi)).
\end{equation}
Recall $M(t,j)=\GL(t-j)\times \GL(j)\times \Sp(2m-2t)$, and the tensor product appearing above is with respect to this decomposition.
Write
\begin{equation}
\pi=\pi_1\bx\pi_2
\end{equation}
with respect to the decomposition \eqref{e:M(t)}, and then
\begin{equation}
\label{e:sum}
r^{M(t)}_{\overline P(t,j)}(\pi)=\sum_{k=1}^\ell\sigma_{j,k}\bx\tau_{j,k}\bx\pi_2
\end{equation}
as a representation of $\GL(t-j)\times \GL(j)\times \Sp(2m-2t)$. 
So we need to compute
\begin{equation}
\label{e:extterm}
\Ext^i_{M(t,j)}
(\xi(t,j)\otimes\S(\GL(j))\bx\omega_{m-t,n-j}),\sigma_{j,k}\bx\tau_{j,k}\bx \pi_2)
\end{equation}
This is a representation of 
$L(j)=\GL(j)\times \O(N-2j)$. We need to know that the space of $L(j)$-smooth vectors has finite length.

By the Kunneth formula, this is a sum of (external tensor products of) terms of the following three types.
The first is
\begin{equation}
\Ext^a_{\GL(t-j)}(\xi(t,j),\sigma_{j,k})
\end{equation}
where $\xi(t,j)$ is a character and $\sigma_{j,k}$ is irreducible. 
By Proposition \ref{p:ep=0} this is finite dimensional (and trivial as a representation of $L(j)$).
The second type is
\begin{equation}
\Ext^b_{\GL(j)}(\S(\GL(j)),\tau_{j,k})
\end{equation}
where $\GL(j)$ is acting $\S(\GL(j))$ on the right, and $\tau_{j,k}$ irreducible. 
By  Lemma \ref{l:S(G)} this is $0$ if $b>0$, and for $b=0$ the space of  smooth vectors 
(with $\GL(j)$ acting on the left on $\S(\GL(j))$ and hence on $\Hom_{\GL(j)}(\S(\GL(j)),\tau_{j,k})$) 
is $\tau_{j,k}$. 
The  third term is
\begin{equation}
\Ext^c_{\Sp(2m-2t)}(\omega_{m-t,n-j},\pi_2)^{\infty},
\end{equation}
which if $\pi_2$ is supercuspidal is nonzero only if $c=0$, in which case it is of finite length for the corresponding orthogonal group.
\end{subequations}

We have proved the following intermediate result in the preceding paragraphs.

\begin{lemma}\label{lemma7.2}
Suppose $\pi = \pi_1 \boxtimes \pi_2$ is an irreducible representation of $M(t)=\GL(t)\times \Sp(2m-2t)$ with $\pi_2$ a supercuspidal representation of
$\Sp(2m-2t)$. 
Then $\Ext^i_{\Sp(2m)}(\omega_{m,n},i_{P(t)}^{Sp(2m)}(\pi))$ is a finite length $O(N)$-module for all $i$.
\end{lemma}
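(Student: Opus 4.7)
\textbf{Proof proposal for Lemma \ref{lemma7.2}.} The plan is simply to package together the step-by-step analysis carried out in the displays \eqref{e:typeI} that immediately precede the lemma statement. First, I would apply Frobenius reciprocity (Lemma \ref{l:frob2}(1)) to identify $\Ext^i_{\Sp(2m)}(\omega_{m,n},i_{P(t)}^{\Sp(2m)}(\pi))$ with $\Ext^i_{M(t)}(r^{\Sp(2m)}_{P(t)}(\omega_{m,n}),\pi)$, so the question becomes one about the Jacquet module, which admits the filtration \eqref{e:filtrationtypeI}.

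Next, I would apply the long exact sequence of Lemma \ref{l:longexact} to each short exact sequence $0\to F_{j+1}\to F_j\to F_j/F_{j+1}\to 0$ and reduce, by downward induction on $j$ from $j=t$ (where $F_{t+1}=0$) down to $j=0$, to showing that $\Ext^i_{M(t)}(F_j/F_{j+1},\pi)^{\infty}$ is a finite length $\O(N)$-module for every $i$ and every $j$. For each fixed $j$, I would invoke the second Frobenius reciprocity (Lemma \ref{l:frob2}(2)) to move the induction $i_{P(t,j)}^{M(t)}$ off the first argument, and Lemma \ref{L:dual_pair_corr} to move the induction $i_{Q(j)}^{\O(N)}$ outside $\Ext$ (after taking smooth vectors for $\O(N)$). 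Since $i_{Q(j)}^{\O(N)}$ preserves finite length, this reduces the problem to the inner Ext group \eqref{e:extterm}, with $r^{M(t)}_{\overline{P}(t,j)}(\pi)$ decomposed into the finite sum \eqref{e:sum} of irreducibles $\sigma_{j,k}\bx\tau_{j,k}\bx\pi_2$.

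I would then apply the Kunneth formula (Lemma \ref{l:kunneth}) factor-by-factor, using the admissibility of the irreducible pieces $\sigma_{j,k}$ and $\tau_{j,k}$ to justify each splitting, to write the inner Ext as a direct sum, over $a+b+c=i$, of tensor products of the three Ext groups already listed: $\Ext^a_{\GL(t-j)}(\xi(t,j),\sigma_{j,k})$, which is finite-dimensional by Proposition \ref{p:ep=0}; $\Ext^b_{\GL(j)}(\S(\GL(j)),\tau_{j,k})^{\infty}$, which vanishes for $b>0$ and equals $\tau_{j,k}$ for $b=0$ by Lemma \ref{l:S(G)}; and $\Ext^c_{\Sp(2m-2t)}(\omega_{m-t,n-j},\pi_2)^{\infty}$. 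The last is where the supercuspidality of $\pi_2$ enters decisively: since $\pi_2$ is supercuspidal it is both projective and injective in the appropriate Bernstein block, so this Ext vanishes for $c>0$ and, for $c=0$, reduces to $\Hom_{\Sp(2m-2t)}(\omega_{m-t,n-j},\pi_2)^{\infty}=\Theta(\pi_2)$ for the corresponding smaller dual pair, which is of finite length by Proposition \ref{p:bigtheta}.

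Combining these, each $\Ext^i_{M(t,j)}$ term is a finite sum of external tensor products of finite-dimensional pieces with a finite length $\O(N-2j)$-module, hence of finite length as an $L(j)$-module; induction through $i_{Q(j)}^{\O(N)}$ keeps it of finite length for $\O(N)$, and the long exact sequence argument on the filtration then yields the lemma. The main obstacle is the third Ext term, where without supercuspidality of $\pi_2$ one would not immediately get vanishing of higher Ext or finite length of $\Theta(\pi_2)$; the supercuspidality assumption is precisely what collapses the spectral sequence / Kunneth decomposition into manageable pieces.
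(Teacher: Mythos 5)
Your proposal reconstructs the paper's own argument faithfully: Frobenius reciprocity passes to the Jacquet module, Kudla's filtration reduces to the subquotients, the second Frobenius reciprocity and Lemma \ref{L:dual_pair_corr} move the two inductions outside, and the K\"unneth formula splits the inner Ext into the three factors, with supercuspidality of $\pi_2$ (a representation of the semisimple group $\Sp(2m-2t)$, hence projective and injective in the whole smooth category) killing the higher Ext in the third factor. One small correction: by Proposition \ref{p:bigtheta}, $\Hom_{\Sp(2m-2t)}(\omega_{m-t,n-j},\pi_2)^{\infty}\simeq \Theta(\pi_2)^{\vee}$, not $\Theta(\pi_2)$ itself; since $\Theta(\pi_2)$ has finite length so does its smooth dual, so the conclusion is unaffected, but the identification as stated is off by a contragredient.
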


\begin{proposition}
Consider the oscillator representation  $\omega$ for the dual pair $(\Sp(2m),\O(N))$,
and suppose $\pi$ is an irreducible representation of $\Sp(2m)$. Then
$\Ext^i_{\Sp(2m)}(\omega,\pi)^{\infty}$ is a finite length $\O(N)$-module for all $i$.
\end{proposition}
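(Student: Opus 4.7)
The strategy is to reduce an arbitrary irreducible $\pi$ of $\Sp(2m)$ to the standard modules handled by Lemma \ref{lemma7.2} (those of the form $i_{P(t)}^{\Sp(2m)}(\pi_1\boxtimes\pi_2)$ with $\pi_2$ supercuspidal on $\Sp(2m-2t)$), combining Casselman's subrepresentation theorem, the exactness of parabolic induction, and iterated use of the long exact sequence of Lemma \ref{l:longexact}.

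By Casselman's subrepresentation theorem, $\pi$ embeds into $i_{P_0}^{\Sp(2m)}(\rho)$ for some irreducible supercuspidal representation $\rho=\rho_{\GL}\boxtimes\rho_{\Sp}$ of a standard Levi $L=\GL(n_1)\times\cdots\times\GL(n_k)\times\Sp(2m-2t)$ with $t=n_1+\cdots+n_k$. Inducing in stages yields
\[
\pi\hookrightarrow Z:=i_{P(t)}^{\Sp(2m)}(W\boxtimes\rho_{\Sp}),\qquad W:=i_{L\cap\GL(t)}^{\GL(t)}(\rho_{\GL}),
\]
where $W$ is a finite-length $\GL(t)$-module. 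A composition series of $W$ lifts, through the exact functor $i_{P(t)}^{\Sp(2m)}(-\boxtimes\rho_{\Sp})$, to a filtration of $Z$ whose successive quotients have the form $i_{P(t)}^{\Sp(2m)}(w\boxtimes\rho_{\Sp})$ with $w$ irreducible on $\GL(t)$. Lemma \ref{lemma7.2} provides finite-length $\Ext^i(\omega,-)^\infty$ for each such quotient, and iterating Lemma \ref{l:longexact} along the filtration gives the same conclusion for $Z$ itself.

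The main obstacle is descending from $Z$ to its submodule $\pi$. Applied to $0\to\pi\to Z\to Z/\pi\to 0$, the long exact sequence realizes $\Ext^i(\omega,\pi)^\infty$ as an extension of a subobject of $\Ext^i(\omega,Z)^\infty$ by a quotient of $\Ext^{i-1}(\omega,Z/\pi)^\infty$; the former is immediately of finite length, but a subquotient of $Z$ does not automatically inherit finite-length Ext from $Z$. One exploits the fact that every composition factor of $Z/\pi$ shares the supercuspidal support of $\pi$, and that by the Bernstein decomposition the set $S$ of irreducibles of $\Sp(2m)$ with that support is finite. Running the above construction simultaneously for each $\pi'\in S$ produces a finite family $\{Z_{\pi'}\}_{\pi'\in S}$ of standard modules with known finite-length Ext. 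A joint induction on $\sum_{\pi'\in S}\ell(Z_{\pi'})$, peeling off a socle summand from some $Z_{\pi'}$ at each stage and feeding the resulting finite-length Ext data back into Lemma \ref{l:longexact}, propagates finite length to every $\pi'\in S$, in particular to $\pi$ itself; the base case of this induction occurs when each $Z_{\pi'}$ is irreducible and is handled directly by Lemma \ref{lemma7.2}.
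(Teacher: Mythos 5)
The proposal correctly identifies two important ingredients: Casselman's subrepresentation theorem embeds $\pi$ into a representation $Z$ fully induced from supercuspidal data, and Lemma~\ref{lemma7.2} handles such standard modules (together with the exactness of induction, which gives a filtration of $Z$ by pieces covered by that lemma). You also correctly pinpoint where the difficulty lies: the long exact sequence applied to $0\to\pi\to Z\to Z/\pi\to 0$ realizes $\Ext^i(\omega,\pi)^\infty$ as an extension of a submodule of $\Ext^i(\omega,Z)^\infty$ by a quotient of $\Ext^{i-1}(\omega,Z/\pi)^\infty$, and the latter is not \emph{a priori} of finite length. (Minor: you should cite Lemma~\ref{l:longexact_2} here, not Lemma~\ref{l:longexact}, since the short exact sequence is in the second argument of $\Hom_H$.)

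However, your resolution of the obstacle does not hold up. The ``joint induction on $\sum_{\pi'\in S}\ell(Z_{\pi'})$'' is not well-founded: the modules $Z_{\pi'}$ are fixed once the Bernstein block is fixed, so the proposed inductive quantity never decreases, and ``peeling off a socle summand from some $Z_{\pi'}$'' does not produce a new collection of standard modules with smaller total length — it produces $Z_{\pi'}/\sigma$, for which finite length of Ext groups is no more known than for $\pi$ itself. Likewise the claimed base case (``each $Z_{\pi'}$ irreducible'') is a hypothesis on the block, not a state your induction ever reaches. More structurally, the long exact sequence tying $\Ext^i(\omega,\pi)$ to $\Ext^{i-1}(\omega,Z/\pi)$ intrinsically lowers the cohomological degree, so any argument fixing $i$ and varying the module cannot close this loop. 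The paper's proof inducts on $i$: granting the result for all irreducibles (hence all finite-length modules) in degrees $\le d$, the exact sequence $\Ext^{d}(\omega,Z/\pi)^\infty\to\Ext^{d+1}(\omega,\pi)^\infty\to\Ext^{d+1}(\omega,Z)^\infty$ has both outer terms of finite length — the left by the induction hypothesis in degree $d$, the right by Lemma~\ref{lemma7.2} — forcing the middle term to be of finite length. The base case $i=0$ is the classical finiteness of $\Theta(\pi)$ (\cite{kudla_induction}, \cite{mvw}). You have all the pieces except this degree-shifting induction, which is the idea that actually makes the argument close.
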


\begin{proof} 
The proof of the Proposition is by  induction on $i$, by  an argument similar to \cite[Section 5, Lemma 3]{prasadext}. Thus we 
assume that we have proved the Proposition for all $i\le d$ and all $\pi$ irreducible, and therefore 
$\Ext^i_{\Sp(2m)}(\omega,\pi)^{\infty}$, $i \leq d$,  also has finite length as an $\O(N)$-module for $\pi$ of finite length as $\Sp(2m)$-module. 
The case $i=0$ is in \cite{kudla_induction} and \cite{mvw}. 

We now need the following lemma which is proved in the same manner as Lemma \ref{l:longexact}. 
\begin{lemma}
\label{l:longexact_2}
Suppose $0\rightarrow A\rightarrow B\rightarrow C\rightarrow 0$ is a short exact sequence of smooth $H$-modules, 
and $X$ is a smooth $G\times H$-module. 
Then the sequence
$$
\begin{aligned}
&0\rightarrow\Hom_H(X,A)^{\infty}\rightarrow
\Hom_H(X,B)^{\infty}\rightarrow
\Hom_H(X,C)^{\infty}\rightarrow\dots\\
&\dots\rightarrow\Ext^{i}_H(X,A)^{\infty}\rightarrow
\Ext^{i}_H(X,B)^{\infty}\rightarrow
\Ext^{i}_H(X,C)^{\infty}\rightarrow\dots\\
\end{aligned}
$$
is exact.
\end{lemma}

Let $\pi$ be any irreducible  representation of $\Sp(2m)$. Then $\pi$ sits in an exact sequence 
\[ 
0 \rightarrow \pi \rightarrow I \rightarrow J \rightarrow 0 
\] 
where $I$ is fully induced from a supercuspidal representation. Lemma \ref{l:longexact_2} now gives an exact sequence 
\[ 
\Ext^i_{\Sp(2m)}(\omega,J)^{\infty}\rightarrow 
\Ext^{i+1}_{\Sp(2m)}(\omega,\pi)^{\infty} \rightarrow \Ext^{i+1}_{\Sp(2m)}(\omega,I)^{\infty}.
\] 
The first term is of finite length for $i\le d$ by the inductive hypothesis, and
this holds by Lemma \ref{lemma7.2} for the last term since $I$ is fully induced from a
supercuspidal representation. This implies that 
the middle term has finite length for $i+1\le d+1$.
\end{proof}

We conclude that the space of $L(j)$-smooth vectors in
\eqref{e:extterm} has finite length, and $\EP^{L(j)-\infty}$ is well defined. 
Therefore we can take the $\EP$ version 
of \eqref{e:extterm}, which by the Kunneth formula equals the tensor product of:
\begin{equation}
\begin{aligned}
&\EP_{\GL(t-j)}(\xi(t,j),\sigma_{j,k}),\\
&\EP_{\GL(j)}(\S(\GL(j)),\tau_{j,k})^{\infty}\simeq \tau_{j,k},\\
&\EP_{\Sp(2m-2t)}(\omega_{m-t,n-j},\pi_2).
\end{aligned}
\end{equation}
The first term is $0$ unless $j=t$, in which case $N(t,j)$ is trivial, 
and \eqref{e:sum} is simply $\pi_1$. 
Here is the conclusion.

\begin{theorem}
Suppose $\O(N)$ is the isometry group of an orthogonal space of dimension $N$ and Witt index $n$. 
Consider the oscillator representation  $\omega_{m,n}$  for the dual pair
$(\Sp(2m),\O(N))$. 
Fix $0\le t\le \min(m,n)$ and let $P(t)=M(t)N(t)$ be the
stabilizer in $\Sp(2m)$ of an isotropic subspace of dimension $t$.
Fix   an irreducible representation $\pi$  of $M(t)\simeq \GL(t)\times \Sp(2m-2t)$.
Consider the space
$$
\EP_{\Sp(2m)}(\omega_{m,n},i_{P(t)}^{\Sp(2m)}(\pi))^\infty
$$
where the smooth vectors are with respect to $\O(N)$. This is an element of the Grothendieck group 
of finite length $\O(N)$-modules.

If $t>n$, then 
$$
\EP_{\Sp(2m)}(\omega_{m,n},i_{P(t)}^{\Sp(2m)}(\pi))^\infty=0.
$$

Suppose $t\le n$, and let $Q(t)=L(t)U(t)$ be the stabilizer of a $t$-dimensional
isotropic subspace of the orthogonal space, so 
$$
L(t)\simeq \GL(t)\times \O(N-2t).
$$
Write
$\pi=\pi_1\bx\pi_2$ for  $M(t)=\GL(t)\times\Sp(2m-2t)$.
Then
$$
\EP_{\Sp(2m)}(\omega_{m,n},
i_{P(t)}^{\Sp(2m)}(\pi))^{\infty}\simeq i_{Q(t)}^{\O(n)}(\pi_1\bx\EP_{\Sp(2m-2t)}(\omega_{m-t,n-t},\pi_2)^{\infty}).
$$
\end{theorem}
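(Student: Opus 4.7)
The plan is to combine the Jacquet-module filtration of $r^{\Sp(2m)}_{P(t)}(\omega_{m,n})$ with the two versions of Frobenius reciprocity (Lemma \ref{l:frob2}) and the K\"unneth formula, and then show that only a single stratum survives in the Euler--Poincar\'e characteristic.

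The first step is to apply Frobenius reciprocity (Lemma \ref{l:frob2}(1)) to get
\[
\Ext^i_{\Sp(2m)}(\omega_{m,n},i_{P(t)}^{\Sp(2m)}(\pi))\simeq \Ext^i_{M(t)}(r^{\Sp(2m)}_{P(t)}(\omega_{m,n}),\pi),
\]
and then feed in the MVW filtration \eqref{e:filtrationtypeI} whose subquotients are indexed by $0\le j\le\min(t,n)$. Since we have already verified (via the previous Proposition and Lemmas \ref{lemma7.2} and \ref{l:longexact}) that each Ext space is finite length over $\O(N)$ after taking smooth vectors, the Euler--Poincar\'e characteristic is additive on this filtration:
\[
\EP_{\Sp(2m)}(\omega_{m,n},i_{P(t)}^{\Sp(2m)}(\pi))^\infty=\sum_{j=0}^{\min(t,n)}\EP_{M(t)}\bigl(i_{P(t,j)\times Q(j)}^{M(t)\times \O(N)}(\xi(t,j)\otimes\S(\GL(j))\bx\omega_{m-t,n-j}),\pi\bigr)^\infty.
\]

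The second step is to analyze the $j$-th summand. Applying the second version of Frobenius reciprocity (Lemma \ref{l:frob2}(2)) to the induction from $P(t,j)$ inside $M(t)$ pulls $i_{Q(j)}^{\O(N)}$ outside and replaces $\pi$ by $r^{M(t)}_{\ol{P(t,j)}}(\pi)$; decomposing the latter as in \eqref{e:sum} and applying K\"unneth (Lemma \ref{l:kunneth}) splits each Ext group into a tensor product of three factors over $\GL(t-j)$, $\GL(j)$ (with $\S(\GL(j))$ acted on from the right), and $\Sp(2m-2t)$. Passing to Euler--Poincar\'e characteristics factor-by-factor, the $\GL(t-j)$-contribution $\EP_{\GL(t-j)}(\xi(t,j),\sigma_{j,k})$ vanishes by Proposition \ref{p:ep=0} whenever $t-j>0$, the $\GL(j)$-contribution collapses to $\tau_{j,k}$ by Lemma \ref{l:S(G)}, and the $\Sp(2m-2t)$-contribution is $\EP_{\Sp(2m-2t)}(\omega_{m-t,n-j},\pi_2)^\infty$.

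The third step extracts the consequences. Vanishing of the $\GL(t-j)$ factor whenever $j<t$ forces every summand with $j<t$ to contribute zero to EP, so only $j=t$ survives. If $t>n$, then no such $j$ exists in the range $0\le j\le\min(t,n)$ and the total EP is zero, giving the first assertion. If $t\le n$, only the $j=t$ term contributes: in that case $N(t,t)$ is trivial, $\xi(t,t)$ disappears, the decomposition \eqref{e:sum} reduces to the single summand $\pi_1\bx\pi_2$, and collecting everything through $i_{Q(t)}^{\O(N)}$ gives the stated isomorphism
\[
\EP_{\Sp(2m)}(\omega_{m,n},i_{P(t)}^{\Sp(2m)}(\pi))^{\infty}\simeq i_{Q(t)}^{\O(N)}\bigl(\pi_1\bx\EP_{\Sp(2m-2t)}(\omega_{m-t,n-t},\pi_2)^{\infty}\bigr).
\]

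The main obstacle is the bookkeeping needed to make the additivity of EP over the MVW filtration actually legal: one must know at each stage that the relevant Ext groups are finite length as $\O(N)$-modules after taking smooth vectors, so that EP is defined and short exact sequences give additive relations. This is exactly what the preceding Proposition (built on Lemmas \ref{lemma7.2} and \ref{l:longexact_2}) provides, and once that finiteness is in hand the rest of the argument is a formal unwinding of Frobenius reciprocity and K\"unneth together with the vanishing result of Proposition \ref{p:ep=0} for EP over $\GL$.
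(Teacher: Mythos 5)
Your proposal follows the same path as the paper: Frobenius reciprocity to pass to $\Ext^i_{M(t)}(r^{\Sp(2m)}_{P(t)}(\omega),\pi)$, the MVW filtration indexed by $j$, the second form of Frobenius reciprocity plus K\"unneth to split each stratum into a $\GL(t-j)$ factor, a $\GL(j)$ factor collapsing via Lemma \ref{l:S(G)}, and an $\Sp(2m-2t)$ factor, followed by the killing of all strata with $j<t$ via Proposition \ref{p:ep=0}, with the finiteness needed to legitimize additivity supplied by Lemma \ref{lemma7.2} and the surrounding Proposition. This matches the paper's argument in structure and in every key lemma cited.
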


This can be stated more succinctly as follows.
Let $\omega_{M(t),M'(t)}$ be the oscillator representation  for the 
dual pair $(M(t),M'(t))=(\GL(t)\times \Sp(2m-2t),\GL(t)\times \O(N-2t))$. 

\begin{corollary}
Consider the oscillator representation  $\omega_{G,G'}$ of the dual pair $(G,G')=(\Sp(2m),\O(N))$.
For $t\le m$, let $P(t)=M(t)N(t)\subset \Sp(2m)$ be the stabilizer of an isotropic space of dimension $t$.
Similarly,
if $t\le n$, let $Q(t)=L(t)U(t)\subset \O(N)$ be the stabilizer of an isotropic space of dimension $t$.

If $t\le\min(m,n)$, 
let $\omega_{M(t),L(t)}$ be the oscillator representation  for the dual pair
$(M(t),L(t))=(\GL(t)\times \Sp(2m-2t),\GL(t)\times \O(N-2t))$.

Fix an irreducible representation $\pi$ of $M(t)$.
Then
$$
\EP_{G}(\omega_{G,G'},i_{P}^{G}(\pi))^{\infty}
\simeq 
\begin{cases}
0&t>n\\
i_{Q(t)}^{G'}(\EP_{M(t)}(\omega_{M(t),L(t)},\pi)^{\infty})&t\le n.
\end{cases}
$$
\end{corollary}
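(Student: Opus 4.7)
The plan is to view the corollary as a compact rewriting of the preceding theorem and to justify the reformulation of the right-hand side. In the theorem, for $t \le n$ the inducing module is $\pi_1 \boxtimes \EP_{\Sp(2m-2t)}(\omega_{m-t,n-t},\pi_2)^\infty$, whereas the corollary writes the inducing data more symmetrically as $\EP_{M(t)}(\omega_{M(t),L(t)},\pi)^\infty$ for the reductive dual pair $(M(t),L(t)) = (\GL(t)\times\Sp(2m-2t),\GL(t)\times\O(N-2t))$. First I would observe that $(M(t),L(t))$ is an external direct sum of the type II pair $(\GL(t),\GL(t))$ and the type I pair $(\Sp(2m-2t),\O(N-2t))$, and that the oscillator representation of a direct sum of dual pairs is the external tensor product of the factor oscillator representations (this is immediate from the additivity of the Heisenberg group under direct sum of symplectic spaces and the corresponding factorization of the Schrödinger model). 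Hence
$$
\omega_{M(t),L(t)} \simeq \omega_{\GL(t),\GL(t)} \boxtimes \omega_{m-t,n-t}.
$$

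Next I would apply the K\"unneth formula (Lemma \ref{l:kunneth}) to the irreducible decomposition $\pi = \pi_1 \boxtimes \pi_2$, writing
$$
\EP_{M(t)}(\omega_{M(t),L(t)},\pi)^\infty \simeq \EP_{\GL(t)}(\omega_{\GL(t),\GL(t)},\pi_1)^\infty \boxtimes \EP_{\Sp(2m-2t)}(\omega_{m-t,n-t},\pi_2)^\infty.
$$
By Theorem \ref{t:epgln} applied with $m=n=t$, the relevant parabolic $Q_m$ in $\GL(n)$ is all of $\GL(t)$ itself, so the normalized induction is trivial and the first factor collapses to $\pi_1$. Substituting this identification into the expression produced by the Theorem yields exactly the form asserted in the corollary; the vanishing case $t > n$ is inherited verbatim from the Theorem.

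The main obstacle, such as it is, is bookkeeping: one must justify the K\"unneth splitting at the level of $\O(N)$-smooth vectors of $\Ext$ groups, which requires knowing that $\Ext^i_{\GL(t)}(\omega_{\GL(t),\GL(t)},\pi_1)^\infty$ and $\Ext^i_{\Sp(2m-2t)}(\omega_{m-t,n-t},\pi_2)^\infty$ are themselves finite length smooth modules of the respective second partners, so that the Euler-Poincar\'e characteristics exist as Grothendieck group elements before the K\"unneth decomposition is applied term by term. The type II finite length claim is Proposition \ref{p:epomega}, and the type I analogue was established just above the preceding Theorem, so both ingredients are already in hand.
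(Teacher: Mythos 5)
Your proof is correct and follows essentially the same route the paper takes: the corollary is presented in the paper without an explicit proof, as a succinct restatement of the preceding theorem, and the reformulation you carry out is exactly what is needed. The factorization $\omega_{M(t),L(t)}\simeq\omega_{\GL(t),\GL(t)}\boxtimes\omega_{m-t,n-t}$ is correct (additivity of the symplectic form under direct sum gives a tensor factorization of the Schr\"odinger model), and collapsing the $\GL(t)\times\GL(t)$ factor via Theorem \ref{t:epgln} with $m=n=t$ (so that $Q_t=\GL(t)$ and $i_{Q_t}^{\GL(t)}(\pi_1\boxtimes 1)=\pi_1$) gives exactly the inducing datum $\pi_1\boxtimes\EP_{\Sp(2m-2t)}(\omega_{m-t,n-t},\pi_2)^\infty$ appearing in the theorem.

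One small point worth being aware of: the K\"unneth formula as stated in Lemma \ref{l:kunneth} assumes $X_1$ is admissible, and in your application $X_1=\omega_{\GL(t),\GL(t)}$ and $X_2=\omega_{m-t,n-t}$ are both non-admissible. The paper itself applies K\"unneth in exactly this fashion in the proof of the preceding theorem (there the factors are $\xi(t,j)$, $\S(\GL(j))$ and $\omega_{m-t,n-j}$, of which only the first is admissible), so this is a shared reliance on a slightly stronger version of K\"unneth than literally stated, not a gap specific to your argument. It does no harm to flag the hypothesis you are using, but the substance of your derivation agrees with the paper.
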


\bibliographystyle{plain}
\def\cprime{$'$} \def\cftil#1{\ifmmode\setbox7\hbox{$\accent"5E#1$}\else
  \setbox7\hbox{\accent"5E#1}\penalty 10000\relax\fi\raise 1\ht7
  \hbox{\lower1.15ex\hbox to 1\wd7{\hss\accent"7E\hss}}\penalty 10000
  \hskip-1\wd7\penalty 10000\box7}
  \def\cftil#1{\ifmmode\setbox7\hbox{$\accent"5E#1$}\else
  \setbox7\hbox{\accent"5E#1}\penalty 10000\relax\fi\raise 1\ht7
  \hbox{\lower1.15ex\hbox to 1\wd7{\hss\accent"7E\hss}}\penalty 10000
  \hskip-1\wd7\penalty 10000\box7}
  \def\cftil#1{\ifmmode\setbox7\hbox{$\accent"5E#1$}\else
  \setbox7\hbox{\accent"5E#1}\penalty 10000\relax\fi\raise 1\ht7
  \hbox{\lower1.15ex\hbox to 1\wd7{\hss\accent"7E\hss}}\penalty 10000
  \hskip-1\wd7\penalty 10000\box7}
  \def\cftil#1{\ifmmode\setbox7\hbox{$\accent"5E#1$}\else
  \setbox7\hbox{\accent"5E#1}\penalty 10000\relax\fi\raise 1\ht7
  \hbox{\lower1.15ex\hbox to 1\wd7{\hss\accent"7E\hss}}\penalty 10000
  \hskip-1\wd7\penalty 10000\box7} \def\cprime{$'$} \def\cprime{$'$}
  \def\cprime{$'$} \def\cprime{$'$} \def\cprime{$'$} \def\cprime{$'$}
  \def\cprime{$'$} \def\cprime{$'$}

\end{document}